\def\<{\langle}
\def\>{\rangle}
\def\RR{\mathbb{R}}
\def\NN{\mathbb{N}}
\newcommand\tr{\operatorname{Tr}}
\newcommand\Div{\operatorname{div}}
\newcommand\id{\operatorname{id}}
\def\Ric{\operatorname{Ric}}
\def\vol{\operatorname{vol}}
\def\eq{\hspace*{-1.5mm}&=&\hspace*{-1.2mm}}
\def\plus{\hspace*{-1.5mm}&+&\hspace*{-1.2mm}}
\newtheorem{corollary}{Corollary}
\newtheorem{definition}{Definition}
\newtheorem{example}{Example}
\newtheorem{remark}{Remark}
\newtheorem{lemma}{Lemma}
\newtheorem{proposition}{Proposition}
\newtheorem{theorem}{Theorem}
\author{Vladimir Rovenski\footnote{Mathematical Department, University of Haifa, Mount Carmel, 31905 Haifa,  Israel
       \newline e-mail: {\tt vrovenski@univ.haifa.ac.il} } }
\title{Integral formulas for a Riemannian manifold with several orthogonal complementary distributions}
\begin{document}

\date{}

\maketitle

\begin{abstract}
In~the paper we prove integral formulae for a Riemannian manifold endowed with $k>2$ orthogonal complementary  distributions,
which generalize well-known formula for $k=2$,
and give applications to splitting and isometric immersions of Riemannian manifolds, in particu\-lar, multiply warped products, and to hypersurfaces with $k>2$ distinct principal curvatures of constant multiplicities.

\vskip1.5mm\noindent
\textbf{Keywords}:
Almost $k$-product manifold, distribution, foliation,
mean curvature vector, mixed scalar curvature,
isometric immersion, multiply warped product,
Dupin hypersurface

\vskip1.5mm
\noindent
\textbf{Mathematics Subject Classifications (2010)} 53C15; 53C12; 53C40

\end{abstract}

\section*{Introduction}

Distributions on a manifold (i.e., subbundles of the tangent bundle) are used to build up notions of integrability, and specifically of a foliated manifold.
Distributions and foliations on Riemannian manifolds appear in various situations, e.g., \cite{bf,g1967,rov-m},
among them warped products are fruitful generalizations of the direct product playing important role in 
mathematics and physics, 
e.g., \cite{chen1}.

 Integral formulae are useful for many problems in the theory of foliations, e.g. \cite{arw2014,rov-m,wa1}:

a)~the existence and characterizing of foliations, whose leaves have a given geometric property;

b)~prescribing the {higher mean curvatures} of the leaves of a foliation;

c)~minimizing functionals like volume and energy defined for tensor fields on a manifold.

\noindent
The~first known integral formula for a closed Riemannian manifold
endowed with a codimension one foliation tells us that the integral mean curvature
of the leaves vanishes, see~\cite{reeb1}. The~second formula in the series of total $\sigma_k$'s --
elementary symmetric functions of principal curvatures of the leaves -- says that for a codimension one foliation with a unit normal $N$ to the leaves the total $\sigma_2$ is a half of the {total Ricci curvature} in the $N$-direction,~e.g., \cite{arw2014}:
\begin{equation}\label{E-sigma2}
 \int_M (2\,\sigma_2-\Ric_{N,N})\,{\rm d}\vol=0,
\end{equation}
which is a consequence of Stokes' theorem applied to $\nabla_N\,N+\sigma_1 N$.
One can see directly that \eqref{E-sigma2} implies nonexistence of totally umbilical foliations on a closed manifold of negative curvature: if the integrand is strictly positive, so is the value of the integral.
The mixed scalar curvature ${\rm S}_{\rm mix}$
is one of the simplest curvature invariants of a pseudo-Riemannian almost-product structure, which can be defined as an averaged sum of sectional curvatures of planes that non-trivially intersect with both of the distributions.
The~following integral formula for a closed Riemannian manifold $(M,g)$ endowed with two complementary orthogonal distributions,
 see \cite{wa1} (and \cite{ra1} for foliations),
 generalizes \eqref{E-sigma2} and has many interesting global corollaries
 (e.g., decomposition criteria using the sign of the mixed scalar curvature, \cite{step1}):
\begin{eqnarray}\label{E-PW2-int}
 \int_M\big({\rm S}_{\rm mix}
 +\|h_1\|^2+\|h_2\|^2-\|H_1\|^2-\|H_2\|^2-\|T_1\|^2-\|T_2\|^2\big)\,{\rm d}\vol_g=0.
\end{eqnarray}
Here 
and $h_i,H_i=\tr_g h_i$ and $T_i$ are the second fundamental form, the mean curvature vector field and the integrability tensor of distributions ${\cal D}_1$ and ${\cal D}_2$ on $(M,g)$.
The~formula \eqref{E-PW2-int} was obtained by calculation of the divergence of the vector field $H_1+H_2$,
\begin{equation}\label{E-PW}
 \Div(H_1 + H_2) = {\rm S}_{\rm mix}
 +\|h_1\|^2+\|h_2\|^2-\|H_1\|^2-\|H_2\|^2-\|T_1\|^2-\|T_2\|^2,
\end{equation}
 and then applying Stokes' theorem.

Recently, the notion of warped products was naturally extended to multiply warped products, e.g., \cite{chen1}.
The notion of {multiply warped product}, in turn, is a special case of a Riemannian almost $k$-product structure with $k\in\{2,\ldots, n\}$ foliations.
This structure can be also viewed in the theory of webs composed of foliations of different dimensions, see~\cite{AG2000}.
A Riemannian almost $k$-product structure appears also on a proper Dupin hypersurface of a real space-form,
i.e., the number $k$ of distinct principal curvatures is constant and each
principal curvature function is constant along its corresponding surface of curvature, see~\cite{cecil-ryan}.

Thus, the problem of generalizing \eqref{E-PW2-int} to the case of $(M,g)$ with $k>2$ distributions is actual.
In Section~\ref{sec:if}, we solve this problem for arbitrary $k>2$.
In Sections~\ref{sec:app} and \ref{sec:dupin} we give applications to splitting and isometric immersions of manifolds, in particular, multiply warped products, and to hypersurfaces with $k>2$ distinct principal curvatures of constant multiplicities.

\section{New integral formulas}
\label{sec:if}

 Let an $n$-dimensional Riemannian manifold $(M,g)$ with the Levi-Civita connection $\nabla$ and the curvature tensor $R$ be endowed with $k>2$ orthogonal $n_i$-dimensional distributions ${\cal D}_i\ (1\le i\le k)$ with $\sum n_i =\dim M$.
There exists on $M$ a~local adapted orthonormal frame $\{E_1,\ldots,E_n\}$, where
\[
 \{E_1,\ldots, E_{n_1}\}\subset{{\cal D}_1},\quad
 \{E_{n_{i-1}+1},\ldots, E_{n_i}\}\subset{{\cal D}_i},\quad 2\le i\le k.
\]
Such $(M,g;{\cal D}_1,\ldots,{\cal D}_k)$ is called a \textit{Riemannian almost $k$-product manifold}, see \cite{g1967} for $k=2$.
A~plane in $TM$ spanned by two vectors belonging to different distributions, say, ${\cal D}_i$ and~${\cal D}_j$
is called~\textit{mixed}.
We will generalize the well-known concept of almost-product manifolds.

\begin{definition}\rm
The function on $(M,g)$ with $k\ge2$ non-degenerate distributions,
\begin{equation}\label{E-Smix-k}
 {\rm S}_{\,\rm mix}({\cal D}_1,\ldots,{\cal D}_k)=\sum\nolimits_{\,i<j}{\rm K}({\cal D}_i,{\cal D}_j)
\end{equation}
will be called the \textit{mixed scalar curvature} of
$(M,g;{\cal D}_1,\ldots,{\cal D}_k)$, where
\[
 {\rm K}({\cal D}_i,{\cal D}_j) = \sum\nolimits_{\,n_{i-1}<a\,\le n_i,\ n_{j-1}<b\le n_j}
 \<R(E_a,{E}_b)\,E_a,\,{E}_{b}\>,\quad i\ne j.
\]
\end{definition}

Let $P_i:TM\to{\cal D}_i$ be the orthoprojector, and $\widehat P_{i}=\id_{TM}-P_i$ be the orthoprojector onto ${\cal D}_{i}^\bot$.
The~second fundamental form $h_i:{\cal D}_i\times {\cal D}_i\to {\cal D}_i^\bot$ (symmetric)
and the integrability tensor $T_i:{\cal D}_i\times {\cal D}_i\to {\cal D}_i^\bot$ (skew-symmetric)
of ${\cal D}_i$ are defined by
\[
 h_i(X,Y) = \frac12\,\widehat P_{i}(\nabla_XY+\nabla_YX),\quad
 T_i(X,Y) = \frac12\,\widehat P_{i}(\nabla_XY-\nabla_YX)=\frac12\,\widehat P_{i}\,[X,Y].
\]
Let $h_{ij},\,H_{ij}=\tr_g h_{ij},\,T_{ij}$ be
the~second fundamental forms, mean curvature vector fiels and the integrability tensors
of distributions ${\cal D}_{i,j}={\cal D}_i\oplus{\cal D}_j$ in $M$,
 and $P_{ij}:TM\to{\cal D}_{i,j}$ be orthoprojectors, etc. Note that
\[
 H_i=\sum\nolimits_{\,j\ne i} P_j H_i,\quad
 H_{1\ldots r}=P_{r+1\ldots k}(H_1+\ldots+H_r),\quad {\rm etc}.
\]
Recall that a distribution ${\cal D}_i$ is called integrable if $T_i=0$,
and ${\cal D}_i$ is called {totally umbilical}, {harmonic}, or {totally geodesic},
if ${h}_i=({H}_i/n_i)\,g,\ {H}_i =0$, or ${h}_i=0$, respectively.

\smallskip

Let $S(r,k)$ be the set of all $r$-combinations (i.e., subsets of $r$ distinct elements) of $\{1,\ldots, k\}$.
The~$S(r,k)$ contains $C^r_k=\frac{k!}{r!(k-r)!}$ elements (where $C^r_k$ denote binomial coefficients).
For example, $S(k-1,k)$ contains $k$ elements.
For any ${\bm q}\,\in S(r,k)$ we may assume $q_1<\ldots<q_r$. Set $h_{\bm q}=h_{q_1,\ldots,q_r}$ and $T_{\bm q}=T_{q_1,\ldots,q_r}$.

Our main goal is the following formula for $k\in\{2,\ldots, n\}$, which generalizes \eqref{E-PW}.

\begin{theorem}\label{T-k3} For a Riemannian almost $k$-product manifold $(M,g;{\cal D}_1,\ldots,{\cal D}_k)$ we have
\begin{eqnarray}\label{E-PW3-k}
 \Div X = 2\,{\rm S}_{\rm mix}({\cal D}_1,\ldots,{\cal D}_k)
 +\sum\nolimits_{\,r\in\{1,k-1\}}\sum\nolimits_{\,{\bm q}\,\in S(r,k)} (\|h_{\bm q}\|^2-\|H_{\bm q}\|^2-\|T_{\bm q}\|^2),
\end{eqnarray}
where $X=\sum\nolimits_{\,r\in\{1,k-1\}}\sum\nolimits_{\,{\bm q}\,\in S(r,k)} H_{\bm q}$.
\end{theorem}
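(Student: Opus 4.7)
The natural plan is to reduce \eqref{E-PW3-k} to the known two-distribution formula \eqref{E-PW} by applying it to each of the $k$ orthogonal splittings $({\cal D}_i,\,{\cal D}_i^\bot)$ with $i=1,\ldots,k$, where ${\cal D}_i^\bot=\bigoplus_{j\ne i}{\cal D}_j$ coincides with the distribution ${\cal D}_{q_1,\ldots,q_{k-1}}$ indexed by $\bm q=(1,\ldots,\hat i,\ldots,k)\in S(k-1,k)$. Under this identification, the second fundamental form, the mean curvature vector, and the integrability tensor of ${\cal D}_i^\bot$ are exactly $h_{\bm q}$, $H_{\bm q}$, and $T_{\bm q}$ in the notation fixed just before the theorem.

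First I would apply \eqref{E-PW} to the almost-product structure $(M,g;{\cal D}_i,{\cal D}_i^\bot)$ to obtain, for every $i$,
\begin{equation*}
 \Div(H_i+H_{\bm q}) = {\rm S}_{\rm mix}({\cal D}_i,{\cal D}_i^\bot)
 +(\|h_i\|^2-\|H_i\|^2-\|T_i\|^2)+(\|h_{\bm q}\|^2-\|H_{\bm q}\|^2-\|T_{\bm q}\|^2).
\end{equation*}
Summing over $i=1,\ldots,k$, the left-hand side becomes $\Div X$ by the definition of $X$, and the quadratic terms on the right reproduce the double sum in \eqref{E-PW3-k}: the $r=1$ piece is contributed once by each $H_i$-block and the $r=k-1$ piece once by each $H_{\bm q}$-block, since $|S(1,k)|=|S(k-1,k)|=k$ and the map $i\mapsto(1,\ldots,\hat i,\ldots,k)$ is a bijection $S(1,k)\to S(k-1,k)$.

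It then remains to verify that $\sum_{i=1}^{k}{\rm S}_{\rm mix}({\cal D}_i,{\cal D}_i^\bot)=2\,{\rm S}_{\rm mix}({\cal D}_1,\ldots,{\cal D}_k)$. This follows directly from Definition~1: an adapted orthonormal frame for ${\cal D}_i^\bot$ is the union of the frames for the ${\cal D}_j$ with $j\ne i$, hence
\begin{equation*}
 {\rm S}_{\rm mix}({\cal D}_i,{\cal D}_i^\bot)=\sum\nolimits_{\,j\ne i}{\rm K}({\cal D}_i,{\cal D}_j),
\end{equation*}
and summing over $i$ together with the symmetry ${\rm K}({\cal D}_i,{\cal D}_j)={\rm K}({\cal D}_j,{\cal D}_i)$ (evident from the sectional-curvature interpretation of ${\rm K}$) counts each unordered pair $\{i,j\}$ twice, producing the factor~$2$.

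The only real obstacle is combinatorial bookkeeping: one must check that the tensors associated with ${\cal D}_i^\bot$ really are those labelled by $\bm q\in S(k-1,k)$, and that no cross-terms for $r\in\{2,\ldots,k-2\}$ appear when one sums the $k$ copies of \eqref{E-PW}. No new local curvature computation is required beyond the already-established two-distribution formula \eqref{E-PW}; the generalization is produced purely by summing it over the $k$ complementary pairings.
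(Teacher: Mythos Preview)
Your proposal is correct and follows exactly the paper's own argument: apply \eqref{E-PW} to each of the $k$ complementary pairs $({\cal D}_i,{\cal D}_i^\bot)$, sum, and use the identity $\sum_i{\rm S}_{\rm mix}({\cal D}_i,{\cal D}_i^\bot)=2\,{\rm S}_{\rm mix}({\cal D}_1,\ldots,{\cal D}_k)$, which the paper isolates as Lemma~\ref{L-smix}. The bookkeeping observation that $i\mapsto(1,\ldots,\hat i,\ldots,k)$ bijects $S(1,k)$ with $S(k-1,k)$ is exactly what makes the two pieces of the double sum in \eqref{E-PW3-k} appear, and your justification of the curvature identity via the decomposition ${\rm S}_{\rm mix}({\cal D}_i,{\cal D}_i^\bot)=\sum_{j\ne i}{\rm K}({\cal D}_i,{\cal D}_j)$ is the same one the paper implicitly uses.
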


\begin{proof}
For $k=2$ we have \eqref{E-PW}.
To illustrate the proof for $k>2$, first consider the case of $k=3$.
Using \eqref{E-PW} for the distributions ${\cal D}_1$ and ${\cal D}_{1}^\bot={\cal D}_2\oplus{\cal D}_3$, we get
\begin{equation}\label{E-PW3-1}
 \Div(H_1 + H_{23}) = 2\,{\rm S}_{\rm mix}({\cal D}_1, {\cal D}_{1}^\bot)
 + (\|h_1\|^2 - \|H_1\|^2 - \|T_1\|^2) + (\|h_{23}\|^2 -\|H_{23}\|^2 - \|T_{23}\|^2),
\end{equation}
and similarly for $({\cal D}_2,\,{\cal D}_{2}^\bot)$ and $({\cal D}_3,\,{\cal D}_{3}^\bot)$.
Summing 3 copies of \eqref{E-PW3-1}, we obtain \eqref{E-PW3-k} for $k=3$:
\begin{eqnarray}\label{E-PW-3}
\nonumber
 && \Div\big(\sum\nolimits_{\,i} H_i +\sum\nolimits_{\,i<j} H_{ij}\big) = 2\,{\rm S}_{\rm mix}({\cal D}_1,{\cal D}_2,{\cal D}_3)  \\
 && +\sum\nolimits_{\,i}(\|h_i\|^2 -\|H_i\|^2 - \|T_i\|^2) +\sum\nolimits_{\,i<j}(\|h_{ij}\|^2 - \|H_{ij}\|^2 -\|T_{ij}\|^2) .
\end{eqnarray}
Next, for arbitrary~$k>2$, we apply \eqref{E-PW} for pairs of distributions $({\cal D}_i,\,{\cal D}_{i}^\bot)$ and get $k$ equalities.
Summing these equations and using Lemma~\ref{L-smix} below, we get \eqref{E-PW3-k}.
\end{proof}

\begin{lemma}\label{L-smix} We have
\begin{equation*}
 2\,{\rm S}_{\,\rm mix}({\cal D}_1,\ldots,{\cal D}_k) = \sum\nolimits_{\,i}{\rm S}_{\,\rm mix}({\cal D}_i,{\cal D}^\bot_i).
\end{equation*}
\end{lemma}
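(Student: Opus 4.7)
The proof plan is essentially a bookkeeping exercise: I want to unpack the definition of ${\rm S}_{\,\rm mix}({\cal D}_i,{\cal D}_i^\bot)$, split the orthogonal complement into the remaining ${\cal D}_j$'s, and then count each unordered pair $\{i,j\}$ twice.

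First, I note that for the pair $({\cal D}_i,{\cal D}_i^\bot)$ (a genuine orthogonal decomposition of $TM$) the definition \eqref{E-Smix-k} reduces to the single term ${\rm S}_{\,\rm mix}({\cal D}_i,{\cal D}_i^\bot)={\rm K}({\cal D}_i,{\cal D}_i^\bot)$. Because the adapted frame $\{E_a\}_{a\le n_i}\cup\{E_b\}_{b>n_i, b\notin{\cal D}_i}$ restricts to an orthonormal basis of ${\cal D}_i$ and, disjointly, to orthonormal bases of each ${\cal D}_j$ with $j\ne i$ whose union spans ${\cal D}_i^\bot$, the double sum defining ${\rm K}({\cal D}_i,{\cal D}_i^\bot)$ splits cleanly as
\[
 {\rm K}({\cal D}_i,{\cal D}_i^\bot)=\sum_{j\ne i}\ \sum_{E_a\in{\cal D}_i,\,E_b\in{\cal D}_j}\langle R(E_a,E_b)E_a,E_b\rangle=\sum_{j\ne i}{\rm K}({\cal D}_i,{\cal D}_j).
\]

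Next, I would sum this identity over $i\in\{1,\ldots,k\}$, obtaining
\[
 \sum_i{\rm S}_{\,\rm mix}({\cal D}_i,{\cal D}_i^\bot)=\sum_i\sum_{j\ne i}{\rm K}({\cal D}_i,{\cal D}_j).
\]
The final step is to observe the symmetry ${\rm K}({\cal D}_i,{\cal D}_j)={\rm K}({\cal D}_j,{\cal D}_i)$, which follows immediately from the curvature identity $\langle R(E_a,E_b)E_a,E_b\rangle=\langle R(E_b,E_a)E_b,E_a\rangle$ applied term by term. Hence each unordered pair $\{i,j\}$ is counted twice in the double sum, giving $2\sum_{i<j}{\rm K}({\cal D}_i,{\cal D}_j)=2\,{\rm S}_{\,\rm mix}({\cal D}_1,\ldots,{\cal D}_k)$, as required.

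There is no genuine obstacle here; the only thing to be careful about is to verify that the definition \eqref{E-Smix-k} applied to the pair $({\cal D}_i,{\cal D}_i^\bot)$ indeed uses bases of ${\cal D}_i$ and ${\cal D}_i^\bot$ (so that cross terms with $E_a,E_b$ both in ${\cal D}_i^\bot$ do \emph{not} appear), and that the union of adapted bases of the ${\cal D}_j$ for $j\ne i$ is an orthonormal basis of ${\cal D}_i^\bot$ — both immediate from the orthogonal-complementary hypothesis on the distributions.
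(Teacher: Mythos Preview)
Your argument is correct and is precisely the unpacking of what the paper means by ``This directly follows from definitions \eqref{E-Smix-k}.'' The paper's proof is a one-liner, and you have simply spelled out the double-counting of unordered pairs that makes it work.
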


\begin{proof}
This directly follows from definitions \eqref{E-Smix-k}.
\end{proof}

\begin{remark}\rm
Just the particular case of \eqref{E-PW3-k} for $k=3$, see \eqref{E-PW-3}, can find many geometrical applications,
because an~almost 3-product structure appears naturally in several topics:

\noindent\
1) almost para-$f$-manifolds, e.g., \cite{tar}.

\noindent\
2) lightlike manifolds, i.e., $(M,g)$ with degenerate metric $g$ of constant rank and index, see \cite{dug}.

\noindent\
3) on orientable 3-manifolds, since they admits 3 linearly independent vector~fields, i.e., $n_i=1$.

\noindent\
4) the theory of 3-webs composed of three generic foliations of different dimensions, e.g.,~\cite{tol}.

\noindent\
5) minimal hypersurfaces in the sphere with 3 distinct principal curvatures, see \cite{ots-86}.

\noindent\
6) tubes over a standard embeddings of a projective plane $FP^2$, for $F = \RR; \mathbb{C};\mathbb{H}$ or $\mathbb{O}$ (Cayley numbers), in $S^4; S^7; S^{13}$, and $S^{25}$, respectively, see~\cite{cecil-ryan}.
\end{remark}

\begin{example}
\rm
For $k=4$ (next to $k=2,3$), formula \eqref{E-PW3-k} reads~as
\begin{eqnarray*}
\nonumber
 && \Div\big(\sum\nolimits_{i} \!H_i +\sum\nolimits_{i<j<s} \!H_{ijs}\big)
 = 2\,{\rm S}_{\rm mix}({\cal D}_1,\ldots,{\cal D}_4)  \\
 && +\sum\nolimits_{i}(\|h_i\|^2 -\|H_i\|^2 - \|T_i\|^2) +\sum\nolimits_{\,i<j<s}(\|h_{ijs}\|^2 -\|H_{ijs}\|^2 - \|T_{ijs}\|^2) ,\\
\end{eqnarray*}
\end{example}

Using Stokes' theorem for \eqref{E-PW3-k} on a closed manifold $(M,g;{\cal D}_1,\ldots,{\cal D}_k)$ yields the novel \textit{integral formula} for any $k\in\{2,\ldots,n\}$, which for $k=2$ coincides with \eqref{E-PW2-int}.

\begin{corollary}
On a closed Riemannian almost $k$-product manifold we have
\begin{equation}\label{E-int-k}
 \int_M\Big( 2\,{\rm S}_{\rm mix}({\cal D}_1,\ldots,{\cal D}_k)
 +\sum\nolimits_{\,r\in\{1,k-1\}}\sum\nolimits_{\,{\bm q}\,\in S(r,k)}(\|h_{\bm q}\|^2-\|H_{\bm q}\|^2-\|T_{\bm q}\|^2)
 \Big)\,{\rm d}\vol_g =0.
\end{equation}
\end{corollary}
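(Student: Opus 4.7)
The plan is a one-step reduction to Theorem~\ref{T-k3} followed by the divergence theorem. The right-hand side of \eqref{E-PW3-k} is literally the integrand of \eqref{E-int-k}, so it suffices to integrate the pointwise identity of Theorem~\ref{T-k3} over $M$ and argue that the left-hand side $\int_M \Div X\,{\rm d}\vol_g$ vanishes. That is the entire content of the corollary; all the geometric/combinatorial work already sits inside Theorem~\ref{T-k3}.

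To justify the vanishing I would first check that $X=\sum_{r\in\{1,k-1\}}\sum_{{\bm q}\in S(r,k)} H_{\bm q}$ is a globally defined smooth vector field on $M$, not just a local expression. This is immediate: each ${\cal D}_i$ is a smooth subbundle of $TM$, so the orthoprojectors $P_i$ and $\widehat P_i$ are smooth, whence the second fundamental forms $h_{\bm q}$ and their traces $H_{\bm q}=\tr_g h_{\bm q}$ are smooth and frame-independent. Consequently $X$ is a bona fide smooth section of $TM$. Since $(M,g)$ is closed, i.e., compact and boundaryless, the divergence theorem (equivalently, Stokes' theorem applied to the $(n-1)$-form $\iota_X\,{\rm d}\vol_g$) yields
\begin{equation*}
 \int_M \Div X\,{\rm d}\vol_g = 0,
\end{equation*}
and substituting \eqref{E-PW3-k} under the integral sign gives \eqref{E-int-k}.

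There is no genuine obstacle here: the only conceptual checks are the smoothness and global definedness of $X$, and the absence of a boundary contribution, both of which are built into the hypothesis that $M$ is closed and that the ${\cal D}_i$ are smooth. In particular, no further curvature identity, no additional frame computation, and no use of Lemma~\ref{L-smix} beyond what is already absorbed into Theorem~\ref{T-k3} is required.
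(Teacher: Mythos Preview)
Your argument is correct and matches the paper's own derivation: the corollary is obtained precisely by applying Stokes' theorem to the divergence identity \eqref{E-PW3-k} on the closed manifold, so that $\int_M \Div X\,{\rm d}\vol_g=0$. The extra remarks you make about the global smoothness of $X$ are valid and harmless, though the paper takes them for granted.
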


\begin{example}
\rm
For few initial values of $k$, $k=3,4$, the integral formula \eqref{E-int-k} reads as follows:
\begin{eqnarray*}
 &&\hskip-6mm \int_M\!\Big(2\,{\rm S}_{\rm mix}({\cal D}_1,{\cal D}_2,{\cal D}_3)
  {+}\!\sum\nolimits_{i}\!\big(\|h_i\|^2 {-}\|H_i\|^2 {-}\|T_i\|^2\big)
  {+}\!\sum\nolimits_{i<j}\!\big(\|h_{ij}\|^2 {-}\|H_{ij}\|^2 {-}\|T_{ij}\|^2\big)\Big){\rm d}\vol_g =0,\\
 &&\hskip-8mm \int_M\!\!\big( 2\,{\rm S}_{\rm mix}({\cal D}_1,...,{\cal D}_4)
 {+}\!\!\sum\nolimits_{i}(\|h_i\|^2 {-}\|H_i\|^2 {-}\|T_i\|^2)
  {+}\!\!\sum\nolimits_{i<j<s}(\|h_{ijs}\|^2 {-}\|H_{ijs}\|^2 {-} \|T_{ijs}\|^2)\big){\rm d}\vol_g \!=0 ,\\
\end{eqnarray*}
\end{example}

In order to simplify the LHS of \eqref{E-PW3-k} to the shorter view $\Div(\sum\nolimits_{\,i} H_i)$, we reorganize the terms
$\Div\big(\sum\nolimits_{\,{\bm q}\,\in S(k-1,k)} H_{\bm q} \big)$ for $r>1$ and obtain new (auxiliary) integral formulae.

\begin{theorem}\label{T-div-k}
For $(M,g;{\cal D}_1,\ldots,{\cal D}_k)$ and any $r\in\{2,\ldots, k-1\}$, we have
\begin{equation}\label{E-L-k4}
 \Div X
 = \sum\nolimits_{\,{\bm q}\,\in S(r,k)} \big(\|H_{\bm q}\|^2
 +\<\,\sum\nolimits_{\,i=1}^r H_{q(i)} - r\,H_{\bm q}\,,\sum\nolimits_{\,j\not\in{\bm q}} H_{j}\> \big),
\end{equation}
where $X=\sum\nolimits_{\,{\bm q}\,\in S(r,k)} H_{\bm q} - C^{\,r-1}_{k-2}\sum\nolimits_{\,i} H_i$.
\end{theorem}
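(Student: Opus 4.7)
The plan is twofold: first, show that the vector field $X$ vanishes pointwise, so that $\Div X \equiv 0$ trivially; then, verify that the right-hand side of \eqref{E-L-k4} also vanishes as a pointwise algebraic identity in the mean curvature vectors $H_1,\ldots,H_k$.

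For the first step, I would expand $H_{\bm q} = P_{\bm q^c}(\sum_{i\in\bm q} H_i) = \sum_{i\in\bm q}\sum_{j\not\in\bm q} P_j H_i$, using the formula recalled in the paper as $H_{1\ldots r} = P_{r+1\ldots k}(H_1+\ldots+H_r)$. A counting argument then shows that, for each pair $(i,j)$ with $i\ne j$, the component $P_j H_i$ is picked up exactly $C^{\,r-1}_{k-2}$ times as $\bm q$ ranges over $S(r,k)$, this being the number of $r$-subsets of $\{1,\ldots,k\}$ that contain $i$ and avoid $j$. Combining with the orthogonal decomposition $H_i = \sum_{j\ne i} P_j H_i$ yields $\sum_{\bm q\in S(r,k)} H_{\bm q} = C^{\,r-1}_{k-2}\sum_i H_i$, so $X\equiv 0$ as a vector field and $\Div X\equiv 0$.

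For the second step, I would exploit the identity $\|H_{\bm q}\|^2 = \<H_{\bm q},\,\sum_{i\in\bm q} H_i\>$, which follows from $H_{\bm q}=P_{\bm q^c}\sum_{i\in\bm q} H_i$ together with the self-adjointness and idempotence of the orthoprojector $P_{\bm q^c}$. Substituting this into the summand of the RHS absorbs the quadratic term into a bilinear expression, and combining with the cross term rewrites each summand as $\<H_{\bm q},\,\sum_{i\in\bm q} H_{q_i} - r\sum_{j\not\in\bm q} H_j\> + \<\sum_{i\in\bm q} H_{q_i},\,\sum_{j\not\in\bm q} H_j\>$. Expanding each factor in its orthogonal decomposition along the ${\cal D}_l$, every basic inner product $\<P_l H_i, P_l H_j\>$ (with pairwise distinct $l,i,j$) then receives a weight determined only by how the indices $i,j,l$ distribute among $\bm q$ and $\bm q^c$.

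The main obstacle is this final combinatorial accounting: partitioning the triples $(i,j,l)$ of distinct indices into cases according to their membership in $\bm q$ versus $\bm q^c$, evaluating the resulting sums of binomial-coefficient weights via Vandermonde-type identities, and verifying that the net coefficient of every $\<P_l H_i, P_l H_j\>$ vanishes. Only after this reorganization does the RHS collapse to $0$, consistently with $\Div X = 0$.
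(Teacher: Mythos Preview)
Your first step is correct and is an observation the paper does not make explicit: the counting argument gives $\sum_{\bm q\in S(r,k)} H_{\bm q}=C^{\,r-1}_{k-2}\sum_i H_i$ pointwise, so indeed $X\equiv 0$ and $\Div X\equiv 0$. This is a genuinely different route from the paper's argument, which never notices that $X$ vanishes: instead it decomposes each $\Div H_{\bm q}$ into partial divergences along the complementary distribution via $\Div H_{\bm q}=\Div_{\bm q^c}H_{\bm q}-\|H_{\bm q}\|^2$, then uses $\Div_{\hat\imath} H_i=\Div H_i+\|H_i\|^2$ to reassemble the full divergences $\Div H_i$, and finally appeals to an inductive step on $k$. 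Your approach is more elementary and, in principle, exposes that the theorem is really a pointwise algebraic identity rather than a genuine divergence formula.

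However, your second step will not go through for the right-hand side exactly as printed. Already for $k=3$, $r=2$ one has $\langle H_{\bm q},\sum_{j\notin\bm q}H_j\rangle=0$ (since $H_{\bm q}\in{\cal D}_{\bm q^c}$ while each $H_j$ with $j\notin\bm q$ lies in ${\cal D}_j^\bot\subset{\cal D}_{\bm q}$), so the RHS collapses to $\sum_{i<j}\|H_{ij}\|^2+2\sum_{i<j}\langle H_i,H_j\rangle=\sum_i\|H_i\|^2+4\sum_{i<j}\langle H_i,H_j\rangle$, which is \emph{not} identically zero. By contrast, the expression that the paper's own proof steps actually produce, namely $C^{\,r-1}_{k-2}\sum_i\|H_i\|^2-\sum_{\bm q}\|H_{\bm q}\|^2+\sum_{\bm q}\langle\sum_{i\in\bm q}H_i,\,H_{\bm q^c}\rangle$, \emph{does} vanish identically and matches the paper's Example for $k=3$. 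Thus your combinatorial verification of ``RHS $=0$'' cannot succeed against \eqref{E-L-k4} as written; what your method really uncovers is a discrepancy between the displayed formula and what the paper's proof delivers.
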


\begin{proof}
Using equality $H_{1\ldots r}=P_{r+1\ldots k}(H_1+\ldots+H_r)$ for ${\bm q}=\{1,\ldots, r\}$, we find
\begin{eqnarray*}
 \Div H_{1\ldots r} \eq \Div_{r+1\ldots k} H_{1\ldots r} -\|H_{1\ldots r}\|^2 \\  
 \eq \Div_{r+1\ldots k}(H_{1}+\ldots+H_r) +\<H_1+\ldots+H_r,\ H_{r+1\ldots k}\> - \|H_{1\ldots r}\|^2, 
\end{eqnarray*}
and similarly for all $C^r_k$ cases of ${\bm q}\in S(r,k)$. Summing the above, we use equalities
\[
 \Div_{r+1\ldots k} H_{1}=\sum\nolimits_{\,j>r} \Div_{j} H_{1},\quad
 \Div_{\,2\ldots k} H_1= \Div H_1 +\|H_1\|^2,\quad {\rm etc}.
\]
We apply \eqref{E-L-k4} with $k-1$ for the distributions ${\cal D}_1,\ldots,{\cal D}_{k-2}$ and ${\cal D}_{k-1,k}$,
and then obtain similar formulas for other choices of a pair of distributions. Summing these $C^2_k$ equations, we get equation of the form \eqref{E-L-k4}
with $k$, comparing coefficients yield the claim.
\end{proof}

\begin{corollary}
The following integral formulae for $r\in\{2,\ldots, k-1\}$ take place on a closed Riemannian almost $k$-product manifold:
\begin{equation}\label{E-k-r}
 \int_M\sum\nolimits_{\,{\bm q}\,\in S(r,k)} \big(\|H_{\bm q}\|^2
 +\<\sum\nolimits_{\,i=1}^r H_{q(i)} - r\,H_{\bm q}\,,\, \sum\nolimits_{\,j\not\in{\bm q}} H_{j}\>\big)\,{\rm d}\vol_g =0 .
\end{equation}
\end{corollary}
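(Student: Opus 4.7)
The plan is to derive these integral formulae as immediate consequences of Theorem~\ref{T-div-k} via Stokes' theorem. For each fixed $r\in\{2,\ldots,k-1\}$, equation \eqref{E-L-k4} of that theorem is a pointwise identity expressing the divergence of the globally defined vector field $X=\sum_{{\bm q}\in S(r,k)} H_{\bm q}-C^{\,r-1}_{k-2}\sum_i H_i$ as precisely the integrand appearing in \eqref{E-k-r}. Integrating this identity over the closed manifold $M$ and applying the divergence theorem, $\int_M \Div X\,\mathrm{d}\vol_g = 0$, then yields \eqref{E-k-r} at once.

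First I would confirm that the vector field $X$ is indeed globally defined and smooth on $M$: each mean curvature vector $H_{\bm q}$ is the $g$-trace of the second fundamental form of the distribution ${\cal D}_{\bm q}=\bigoplus_{i\in{\bm q}}{\cal D}_i$, and since the ${\cal D}_i$ are given as mutually orthogonal subbundles of $TM$, their orthogonal sums are globally defined subbundles, and the associated mean curvature vectors are smooth vector fields on $M$ independent of the choice of local adapted frame. Thus every term on the right-hand side of \eqref{E-L-k4} is a globally defined smooth function, and the identity may be integrated over $M$ with no frame-dependence issue.

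The only remaining point is the vanishing $\int_M \Div X\,\mathrm{d}\vol_g = 0$, which is the standard divergence theorem on a closed oriented Riemannian manifold (Stokes' theorem applied to the $(n-1)$-form $\iota_X \mathrm{d}\vol_g$); no boundary term appears. This step is entirely analogous to the derivation of \eqref{E-int-k} from \eqref{E-PW3-k} carried out earlier in the section.

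Since the substantive work was already performed in Theorem~\ref{T-div-k}, I do not foresee any real obstacle: the corollary is a one-line application of Stokes' theorem, and the proof can be written simply as \emph{``Integrate \eqref{E-L-k4} over $M$ and apply the divergence theorem.''}
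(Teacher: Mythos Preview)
Your proposal is correct and matches the paper's approach: the corollary is stated immediately after Theorem~\ref{T-div-k} with no separate proof, and is obtained precisely by integrating \eqref{E-L-k4} over the closed manifold and applying the divergence theorem, as you describe.
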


\begin{example}
\rm
For $k=3$ and $r=2$, the formula \eqref{E-k-r} reads as
\begin{equation*}
 \int_M\big(\sum\nolimits_{\,i} \|H_i\|^2 + \sum\nolimits_{\,i<j} \big(2\<H_i,H_j\> -\|H_{ij}\|^2\big)\big)\,{\rm d}\vol_g =0,
\end{equation*}
and for $k=4$ and $r=2$, the formula \eqref{E-k-r} reads as
\begin{equation*}
 \int_M\big( 2\sum\nolimits_{\,i} \|H_i\|^2 +\sum\nolimits_{\,i,\,j<s} \<H_i,\,H_{js}\> -\sum\nolimits_{\,i<j} \|H_{ij}\|^2
 \big)\,{\rm d}\vol_g =0.
\end{equation*}
The reader can easily find the integral formula corresponding to \eqref{E-k-r} with $k=4$ and $r=3$.
\end{example}

From Theorem~\ref{T-k3} and Theorem~\ref{T-div-k} for $r=k-1$, we obtain the following companion of \eqref{E-PW3-k}.

\begin{theorem}
For a Riemannian almost $k$-product manifold $(M,g;{\cal D}_1,\ldots,{\cal D}_k)$ we have
\begin{eqnarray*}
 &&\hskip-6mm 2\Div\big(\sum\nolimits_{\,i} H_i \big) = 2\,{\rm S}_{\rm mix}({\cal D}_1,\ldots,{\cal D}_k)
 +\sum\nolimits_{\,r\in\{1,k-1\}}\sum\nolimits_{\,{\bm q}\,\in S(r,k)}\big(\|h_{\bm q}\|^2-\|H_{\bm q}\|^2-\|T_{\bm q}\|^2\big) \\
 &&
 -\sum\nolimits_{\,{\bm q}\,\in S(k-1,k)}\big(\|H_{\bm q}\|^2+\<\,\sum\nolimits_{\,i=1}^{k-1} H_{q(i)} - (k-1)\,H_{\bm q}\,,\sum\nolimits_{\,j\not\in{\bm q}} H_{j}\,\> \big) .
\end{eqnarray*}
\end{theorem}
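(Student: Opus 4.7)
The plan is to combine Theorem~\ref{T-k3} with Theorem~\ref{T-div-k} specialized to $r=k-1$, exactly as flagged in the sentence preceding the statement. Write $X_{(1)}$ for the vector field appearing in Theorem~\ref{T-k3} and $X_{(2)}$ for the one in Theorem~\ref{T-div-k} with $r=k-1$. Using $C^{r-1}_{k-2}=C^{k-2}_{k-2}=1$, one has
\[
 X_{(1)} = \sum\nolimits_{i} H_i + \sum\nolimits_{{\bm q}\in S(k-1,k)} H_{\bm q},
 \qquad
 X_{(2)} = \sum\nolimits_{{\bm q}\in S(k-1,k)} H_{\bm q} - \sum\nolimits_{i} H_i.
\]

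The key algebraic observation is that the $S(k-1,k)$-sums in $X_{(1)}$ and $X_{(2)}$ cancel in the difference, while the two $\pm\sum_i H_i$ terms reinforce, giving $X_{(1)}-X_{(2)} = 2\sum_i H_i$. Taking divergences yields $2\,\Div\bigl(\sum_i H_i\bigr) = \Div X_{(1)} - \Div X_{(2)}$, and substituting the right-hand sides provided by the two source theorems produces the asserted identity term by term: the $2\,{\rm S}_{\rm mix}$ contribution together with the sum over $r\in\{1,k-1\}$ is inherited from Theorem~\ref{T-k3}, while the subtracted sum over $S(k-1,k)$ is precisely the right-hand side of Theorem~\ref{T-div-k} at $r=k-1$.

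No analytic step is involved; the entire argument is a one-line subtraction of two previously established divergence identities. The only place demanding care is the combinatorial bookkeeping: verifying $C^{k-2}_{k-2}=1$ so that $X_{(2)}$ carries exactly one negative copy of $\sum_i H_i$, and checking that the sums over $S(k-1,k)$ appear with coefficient $+1$ in both $X_{(1)}$ and $X_{(2)}$ so that they cancel cleanly. Once this is in place the identity follows by inspection, so there is no genuine obstacle to overcome.
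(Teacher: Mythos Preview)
Your proof is correct and is exactly the argument the paper has in mind: the statement is obtained by subtracting the identity of Theorem~\ref{T-div-k} at $r=k-1$ from that of Theorem~\ref{T-k3}, using $C^{k-2}_{k-2}=1$ so that $X_{(1)}-X_{(2)}=2\sum_i H_i$. There is nothing to add.
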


\begin{corollary}\label{C-k-2} The following integral formula takes place on a closed $(M,g;{\cal D}_1,\ldots,{\cal D}_k)$:
\begin{eqnarray*}
 &&\hskip-6mm \int_M\Big( 2\,{\rm S}_{\rm mix}({\cal D}_1,\ldots,{\cal D}_k)
  +\sum\nolimits_{\,r\in\{1,k-1\}}\sum\nolimits_{\,{\bm q}\,\in S(k-1,k)}\big(\|h_{\bm q}\|^2-\|H_{\bm q}\|^2-\|T_{\bm q}\|^2\big)\\
 &&
 -\sum\nolimits_{\,{\bm q}\,\in S(k-1,k)}\big(\|H_{\bm q}\|^2+\<\,\sum\nolimits_{\,i=1}^{k-1} H_{q(i)} - (k-1)\,H_{\bm q}\,, \sum\nolimits_{\,j\not\in{\bm q}} H_{j}\,\> \big)\Big)\,{\rm d}\vol_g =0 .
\end{eqnarray*}
\end{corollary}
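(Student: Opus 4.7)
The plan is to integrate the pointwise divergence identity established in the (unnumbered) theorem immediately preceding the corollary and then invoke Stokes' theorem on the closed manifold. That theorem asserts
\[
 2\,\Div\bigl(\sum\nolimits_{\,i} H_i\bigr) \;=\; 2\,{\rm S}_{\rm mix}({\cal D}_1,\ldots,{\cal D}_k) \;+\; (\text{the expression appearing inside the integral of the corollary}),
\]
so the corollary amounts to the claim that the left-hand side integrates to zero.

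First I would check that $X := \sum\nolimits_{\,i} H_i$ is a globally defined smooth vector field on $M$. Each orthoprojector $\widehat P_i:TM\to{\cal D}_i^\bot$ is smooth because the distribution ${\cal D}_i$ is; the mean curvature $H_i$ is the trace of $(Y,Z)\mapsto\widehat P_i\nabla_Y Z$ over any local orthonormal frame of ${\cal D}_i$, and this trace is frame-independent, so the local expressions glue into a global smooth section of $TM$. Consequently $X\in\Gamma(TM)$ and $\Div X$ is a well-defined smooth function on $M$.

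Next, since $M$ is compact and without boundary, the divergence theorem yields $\int_M \Div X\,{\rm d}\vol_g = 0$. Multiplying by $2$ and using the pointwise identity above gives
\[
 0 \;=\; \int_M 2\,\Div X\,{\rm d}\vol_g \;=\; \int_M \Bigl(2\,{\rm S}_{\rm mix}({\cal D}_1,\ldots,{\cal D}_k) + \cdots\Bigr)\,{\rm d}\vol_g ,
\]
which is precisely the assertion of the corollary.

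There is no substantive obstacle in this final step; the corollary is a direct corollary in the literal sense. All the real work is upstream, in Theorem~\ref{T-k3} (which summed $k$ copies of the two-distribution formula \eqref{E-PW} and used Lemma~\ref{L-smix} to replace $\sum_i {\rm S}_{\rm mix}({\cal D}_i,{\cal D}_i^\bot)$ by $2\,{\rm S}_{\rm mix}$) and in Theorem~\ref{T-div-k} applied with $r=k-1$ (which rewrites the divergence of the $S(k-1,k)$-part of the vector field from Theorem~\ref{T-k3} as pairings of the mean curvature vectors $H_i$). The unnumbered theorem just above the corollary subtracts those two identities to isolate $2\,\Div(\sum\nolimits_{\,i} H_i)$ on the left, and integration produces the stated formula.
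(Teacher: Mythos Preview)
Your proposal is correct and follows exactly the route the paper intends: the corollary is obtained from the preceding theorem by integrating the pointwise identity $2\,\Div\big(\sum_i H_i\big)=\ldots$ over the closed manifold and applying Stokes' theorem, and your extra care in noting that $\sum_i H_i$ is a globally defined smooth vector field is a harmless (and welcome) elaboration. The paper gives no separate proof for this corollary precisely because it is an immediate consequence in this sense.
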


\begin{example}
\rm
For particular case of $k=3$, the integral formula of Corollary~\ref{C-k-2} reads as
\begin{eqnarray*}
 && \int_M\Big({\rm S}_{\rm mix}({\cal D}_1,{\cal D}_2,{\cal D}_3) -\sum\nolimits_{\,i} \|H_i\|^2 -\sum\nolimits_{\,i<j} \<H_i,\,H_j\>
 + \frac12\sum\nolimits_{\,i} \big( \|h_i\|^2 - \|T_i\|^2\big) \\
 &&\qquad +\,\frac12\sum\nolimits_{\,i<j} \big( \|h_{ij}\|^2 - \|T_{ij}\|^2\big)\Big)\,{\rm d}\vol_g =0.
\end{eqnarray*}
The above formula was obtained in \cite{BM-1} by long direct calculations of $\Div(H_1+H_2+H_3)$.
\end{example}

\section{Splitting and isometric immersions of manifolds}
\label{sec:app}

Here, we use results of Section~\ref{sec:if} to prove some splitting and non-existence of
immersions results for Riemannian almost $k$-product manifolds with $k>2$.
 We say that $(M,g;{\cal D}_1,\ldots,{\cal D}_k)$ \textit{splits} if
all distributions  ${\cal D}_i$ are integrable and
$M$ is locally the direct product $M_1\times\ldots\times M_k$ with foliations tangent to ${\cal D}_i$.
Recall that if a simply connected manifold splits then it is the direct~product.

We apply the submanifolds theory to Riemannian almost $k$-product manifolds.

\begin{definition}\rm
A pair $({\cal D}_i,{\cal D}_j)$ with $i\ne j$ of distributions on
$(M,g;{\cal D}_1,\ldots,{\cal D}_k)$ with $k>2$ is called

a) \textit{mixed totally geodesic}, if $h_{ij}(X,Y)=0$ for all $X\in{\cal D}_i$ and $Y\in{\cal D}_j$.

b) \textit{mixed integrable}, if $T_{ij}(X,Y)=0$ for all $X\in{\cal D}_i$ and $Y\in{\cal D}_j$.
\end{definition}

\begin{lemma}\label{L-mixed-YU-TG}
If each pair $({\cal D}_i,{\cal D}_j)$ on $(M,g;{\cal D}_1,\ldots,{\cal D}_k)$ with $k>2$ is

a$)$ {mixed totally geodesic}, then $h_{\bm q}(X,Y)=0$ \ \
b$)$ {mixed integrable}, then $T_{\bm q}(X,Y)=0$
\newline
for all ${\bm q}\in S(r,k)$, $2<r<k$ and $X\in{\cal D}_{q(1)},\,Y\in{\cal D}_{q(2)}$.
\end{lemma}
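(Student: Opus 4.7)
The plan is to deduce both (a) and (b) from a single elementary observation about the orthogonal projectors. Denote by $i=q(1)$ and $j=q(2)$ the two specified indices; since $i,j\in{\bm q}$, we have ${\cal D}_i\oplus{\cal D}_j\subseteq{\cal D}_{\bm q}$, or equivalently ${\cal D}_{\bm q}^\bot\subseteq({\cal D}_i\oplus{\cal D}_j)^\bot$. Consequently any vector $V\in TM$ annihilated by $\widehat P_{ij}$ lies in ${\cal D}_i\oplus{\cal D}_j\subseteq{\cal D}_{\bm q}$, and hence automatically satisfies $\widehat P_{\bm q}V=0$ as well.

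Applying this observation to the vectors $V=\nabla_XY+\nabla_YX$ and $V=[X,Y]$ for $X\in{\cal D}_i$, $Y\in{\cal D}_j$ yields (a) and (b) respectively. The mixed-totally-geodesic (resp.\ mixed-integrable) hypothesis on the pair $({\cal D}_i,{\cal D}_j)$ gives $\widehat P_{ij}(\nabla_XY+\nabla_YX)=2\,h_{ij}(X,Y)=0$ (resp.\ $\widehat P_{ij}[X,Y]=2\,T_{ij}(X,Y)=0$), and the projector comparison above then forces $h_{\bm q}(X,Y)=\frac12\widehat P_{\bm q}(\nabla_XY+\nabla_YX)=0$ (resp.\ $T_{\bm q}(X,Y)=\frac12\widehat P_{\bm q}[X,Y]=0$).

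I do not anticipate a genuine obstacle here: the lemma is organizational in nature. Enlarging a distribution shrinks its orthogonal complement, so a vanishing projection onto the bigger complement $({\cal D}_i\oplus{\cal D}_j)^\bot$ automatically implies vanishing of the projection onto the smaller complement ${\cal D}_{\bm q}^\bot$. No curvature identity, divergence computation, or integration is required; the content is merely that $h_{\bm q}$ and $T_{\bm q}$ of a larger distribution detect fewer components of the ambient covariant derivative than their counterparts for a contained sub-pair.
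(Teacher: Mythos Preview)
Your argument is correct. The key observation---that $\mathcal{D}_i\oplus\mathcal{D}_j\subseteq\mathcal{D}_{\bm q}$ forces $\widehat P_{\bm q}=\widehat P_{\bm q}\circ\widehat P_{ij}$, so $\widehat P_{ij}V=0$ implies $\widehat P_{\bm q}V=0$---is exactly the content of the lemma, and applying it to $V=\nabla_XY+\nabla_YX$ and $V=[X,Y]$ settles (a) and (b) at once.

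The paper's own proof is the single sentence ``This follows by mathematical induction,'' presumably inducting on $r$ by passing from $\mathcal{D}_{\bm q'}$ with $|\bm q'|=r-1$ to $\mathcal{D}_{\bm q}=\mathcal{D}_{\bm q'}\oplus\mathcal{D}_{q(r)}$. Your direct projector comparison reaches the conclusion in one step rather than $r-2$ steps, but the underlying idea---enlarging the distribution shrinks the complement onto which one projects---is the same. Your version is arguably cleaner and makes explicit what the induction would unwind to; neither approach requires anything beyond the definitions.
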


\begin{proof} This follows by mathematical induction.
\end{proof}

The next our results generalize \cite[Theorem~6]{step1} and \cite[Theorem~2]{wa1} on case $k=2$.

\begin{theorem}
Let a Riemannian almost $k$-product manifold $(M,g;{\cal D}_1,\ldots,{\cal D}_k)$ with $k>2$ has integrable harmonic distributions
${\cal D}_1,\ldots,{\cal D}_k$. If~$\,{\rm S}_{\rm mix}({\cal D}_1,\ldots,{\cal D}_k)\ge0$,
and each pair $({\cal D}_i,{\cal D}_j)$ is mixed integrable, then $M$ splits.
\end{theorem}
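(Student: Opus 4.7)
The plan is to reduce the hypotheses to the integral formula \eqref{E-int-k} with $r\in\{1,k-1\}$ and squeeze a pointwise conclusion out of the nonnegativity of ${\rm S}_{\rm mix}$ together with the nonnegativity of the remaining terms. Since the manifold carries $k>2$ distributions each of which is integrable and harmonic, we have $T_i=0$ and $H_i=0$ for every $i$. The identity $H_{\bm q}=P_{\{1,\ldots,k\}\setminus{\bm q}}\bigl(\sum_{i\in{\bm q}} H_i\bigr)$ recorded just before Theorem~\ref{T-k3} then gives $H_{\bm q}=0$ for every ${\bm q}\in S(r,k)$ and every $r$. For the integrability tensors, mixed integrability of every pair together with $T_i=0$ gives $\widehat P_{\bm q}[X,Y]=0$ whenever $X\in{\cal D}_i$ and $Y\in{\cal D}_j$ for any $i,j\in{\bm q}$ (Lemma~\ref{L-mixed-YU-TG}(b) applied repeatedly, since for $i=j$ the individual integrability of ${\cal D}_i$ keeps $[X,Y]$ inside ${\cal D}_i\subset{\cal D}_{\bm q}$), so $T_{\bm q}=0$ for every ${\bm q}\in S(r,k)$ with $r<k$.

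Substituting $H_{\bm q}=0$ and $T_{\bm q}=0$ into the integral formula \eqref{E-int-k} collapses it to
\begin{equation*}
 \int_M\Big( 2\,{\rm S}_{\rm mix}({\cal D}_1,\ldots,{\cal D}_k)
 +\sum\nolimits_{\,r\in\{1,k-1\}}\sum\nolimits_{\,{\bm q}\,\in S(r,k)}\|h_{\bm q}\|^2\Big)\,{\rm d}\vol_g=0.
\end{equation*}
All summands are pointwise nonnegative, with ${\rm S}_{\rm mix}\ge 0$ by hypothesis, so the integrand vanishes identically. In particular $h_i\equiv 0$ for each $i$ (the terms with $r=1$) and $h_{\bm q}\equiv 0$ for each ${\bm q}\in S(k-1,k)$ (the terms with $r=k-1$). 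Equivalently, each ${\cal D}_i$ is totally geodesic and each complementary distribution ${\cal D}_i^\bot=\bigoplus_{j\ne i}{\cal D}_j$ is also totally geodesic; and by Step~1 both ${\cal D}_i$ and ${\cal D}_i^\bot$ are integrable.

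For every $i$, the orthogonal pair $({\cal D}_i,{\cal D}_i^\bot)$ thus consists of two mutually complementary, integrable, totally geodesic distributions on $(M,g)$, so de Rham's local decomposition theorem gives a local product structure $M\cong M_i\times N_i$ with leaves of ${\cal D}_i$ and ${\cal D}_i^\bot$ as factors. Applying this splitting in turn for $i=1,\ldots,k$ and intersecting the resulting local product charts (inducting on $k$, using that each ${\cal D}_i$ restricts to a parallel distribution on the second factor $N_i=M_2\times\cdots\times M_k$), one obtains a local isometry $M\cong M_1\times\cdots\times M_k$ with the $i$-th factor tangent to ${\cal D}_i$; this is precisely the statement that $(M,g;{\cal D}_1,\ldots,{\cal D}_k)$ splits. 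The main subtlety, and the only place one has to be careful, is this final iteration: one must check that after the first de Rham splitting the remaining $k-1$ distributions descend as orthogonal, integrable, totally geodesic distributions on the second factor, which however is immediate since all the $h_{\bm q}$ and $T_{\bm q}$ we need have already been shown to vanish.
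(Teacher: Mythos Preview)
Your argument follows the same outline as the paper's proof, but it has one genuine gap: you invoke the integral formula \eqref{E-int-k}, which is only available when $M$ is closed, and the theorem makes no compactness assumption. The paper instead works directly with the pointwise divergence identity \eqref{E-PW3-k}. Once you have shown $H_{\bm q}=0$ for every ${\bm q}\in S(r,k)$ with $r\in\{1,k-1\}$, the vector field $X=\sum_{r\in\{1,k-1\}}\sum_{{\bm q}\in S(r,k)} H_{\bm q}$ is identically zero, so $\Div X=0$ \emph{pointwise}. Substituting this together with $T_{\bm q}=0$ into \eqref{E-PW3-k} gives
\[
 0 = 2\,{\rm S}_{\rm mix}({\cal D}_1,\ldots,{\cal D}_k)
 +\sum\nolimits_{\,r\in\{1,k-1\}}\sum\nolimits_{\,{\bm q}\,\in S(r,k)}\|h_{\bm q}\|^2
\]
at every point of $M$, and the rest of your argument goes through unchanged. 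Apart from this, your reductions ($H_{\bm q}=0$, $T_{\bm q}=0$) and the appeal to de Rham are exactly what the paper does.
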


\begin{proof}
From the equality $H_{1\ldots r}=P_{r+1\ldots k}(H_1+\ldots+H_r)$ follows that
$H_i=0$ for all $i\in\{1,\ldots, k\}$, then $H_{\bm q}=0$ for all ${\bm q}\,\in S(r,k)$ and $2\le r<k$.
Similarly (by Lemma~\ref{L-mixed-YU-TG}), if $T_{ij}=0$ for all $i\in\{1,\ldots, k\}$,
then $T_{\bm q}=0$ for all ${\bm q}\,\in S(r,k)$ and $2\le r<k$. By conditions and \eqref{E-PW3-k},
\begin{equation*}
 2\,{\rm S}_{\rm mix}({\cal D}_1,\ldots,{\cal D}_k) +\sum\nolimits_{r\in\{1,k-1\}} \sum\nolimits_{\,{\bm q}\,\in S(r,k)}(\|h_{\bm q}\|^2 =0.
\end{equation*}
Thus, $h_{\bm q}=0$ for all ${\bm q}\,\in S(r,k)$ with $r\in\{1,k-1\}$, in particular, $h_i=0\ (1\le i\le k)$.
By~well-known de Rham decomposition theorem, $(M,g)$~splits.
\end{proof}

\begin{theorem}
Let a complete open Riemannian almost $k$-product manifold $(M,g;{\cal D}_1,\ldots,{\cal D}_k)$ with $k>2$ has totally umbilical distributions
such that each pair $({\cal D}_i,{\cal D}_j)$ is mixed totally geodesic and $\<H_i,H_j\>=0$ for $i\ne j$.
If $\,{\rm S}_{\rm mix}({\cal D}_1,\ldots,{\cal D}_k)\le0$ and $\|H_i\|\in{\rm L}^1(M,g)$ for $1\le i\le k$,
then $M$~splits.
\end{theorem}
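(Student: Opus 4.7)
The plan is to apply the divergence identity \eqref{E-PW3-k} of Theorem~\ref{T-k3} with $X=\sum_{r\in\{1,k-1\}}\sum_{\bm q\in S(r,k)}H_{\bm q}$, reduce its right-hand side under the hypotheses to a pointwise non-positive expression, and then invoke Karp's $L^1$-divergence theorem on the complete manifold $(M,g)$ to force every non-positive summand to vanish identically. The hypothesis $\|H_i\|\in L^1(M,g)$ gives $\|X\|\in L^1(M,g)$, because $H_{\bm q}=P_{\{1,\ldots,k\}\setminus\bm q}(H_{q(1)}+\ldots+H_{q(r)})$ yields $\|H_{\bm q}\|\le\sum_i\|H_i\|$.

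For the $r=1$ terms, total umbilicity of ${\cal D}_i$ gives $\|h_i\|^2=\|H_i\|^2/n_i$, hence $\|h_i\|^2-\|H_i\|^2=-\frac{n_i-1}{n_i}\|H_i\|^2\le 0$. For the $r=k-1$ terms, fix $\bm q=\{1,\ldots,k\}\setminus\{s\}$ so that ${\cal D}_{\bm q}^\bot={\cal D}_s$. Total umbilicity gives $h_{\bm q}(X,Y)=(P_sH_i/n_i)\,g(X,Y)$ for $X,Y\in{\cal D}_i$ with $i\ne s$, while the mixed totally geodesic pair hypothesis gives $h_{ij}(X,Y)=0$ for $X\in{\cal D}_i,Y\in{\cal D}_j$ ($i\ne j$), and since $s\ne i,j$ one has $h_{\bm q}(X,Y)=P_sh_{ij}(X,Y)=0$ on these cross-blocks. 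Therefore $\|h_{\bm q}\|^2=\sum_{i\ne s}\|P_sH_i\|^2/n_i$ and $H_{\bm q}=\sum_{i\ne s}P_sH_i$. Summing over $\bm q\in S(k-1,k)$ and using $\<H_i,H_j\>=\sum_{l\ne i,j}\<P_lH_i,P_lH_j\>=0$, every cross-product cancels and one obtains
\[
\sum\nolimits_{\bm q\in S(k-1,k)}\big(\|h_{\bm q}\|^2-\|H_{\bm q}\|^2\big)=-\sum\nolimits_i\frac{n_i-1}{n_i}\|H_i\|^2.
\]

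Combining the two blocks, \eqref{E-PW3-k} reduces to
\[
\Div X=2\,{\rm S}_{\rm mix}({\cal D}_1,\ldots,{\cal D}_k)-2\sum\nolimits_i\frac{n_i-1}{n_i}\|H_i\|^2-\sum\nolimits_{r\in\{1,k-1\}}\sum\nolimits_{\bm q\in S(r,k)}\|T_{\bm q}\|^2\le 0,
\]
with all three blocks non-positive. Karp's theorem then gives $\Div X\equiv 0$, whence each block vanishes separately: ${\rm S}_{\rm mix}\equiv 0$, $H_i\equiv 0$ for every $i$ with $n_i\ge 2$, and $T_{\bm q}\equiv 0$ for all $\bm q\in S(1,k)\cup S(k-1,k)$. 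Total umbilicity together with $H_i=0$ gives $h_i=0$, so each ${\cal D}_i$ is integrable and totally geodesic; the explicit formula for $\|h_{\bm q}\|^2$ then also forces $h_{\bm q}=0$ for $\bm q\in S(k-1,k)$, so each ${\cal D}_i^\bot$ is integrable and totally geodesic too. The de~Rham decomposition theorem then yields the required local product structure $M=M_1\times\cdots\times M_k$.

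The main obstacle is the non-compact Stokes step: one must verify that the $L^1$-control of $\|H_i\|$ propagates to $X$ and makes Karp's criterion applicable in the complete (rather than closed) setting. A secondary bookkeeping point is that the cancellation of the $\<P_sH_i,P_sH_j\>$ cross-terms takes place only after summation over $s\in\{1,\ldots,k\}$, not pointwise for a single $\bm q$. A final mild subtlety arises when some $n_i=1$, since $\frac{n_i-1}{n_i}$ then vanishes and the above argument does not directly produce $H_i=0$; in that case one uses the automatic integrability of the 1-dimensional distribution together with $\<H_i,H_j\>=0$ and the already-established vanishing of the other mean-curvature vectors to complete the splitting.
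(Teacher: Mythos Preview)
Your proof is correct and follows essentially the same route as the paper: apply the divergence identity \eqref{E-PW3-k}, bound $\|X\|$ by $\sum_i\|H_i\|$ to get $\|X\|\in L^1(M,g)$, show the right-hand side is pointwise non-positive under the hypotheses, invoke the $L^1$-divergence lemma (the paper cites Caminha--Souza--Camargo, Lemma~\ref{L-Div-1}; you call it Karp's theorem), and finish with de~Rham. Your bookkeeping is in fact slightly more careful than the paper's: the paper asserts the per-$\bm q$ identity $\|h_{\bm q}\|^2-\|H_{\bm q}\|^2=-\sum_i\frac{n_{q(i)}-1}{n_{q(i)}}\|\widehat P_{\bm q}H_{q(i)}\|^2$, whereas you correctly observe that for $r=k-1$ the cross-terms $\<P_sH_i,P_sH_j\>$ need not vanish individually and only cancel after summing over $s$ using $\<H_i,H_j\>=0$; you also flag the $n_i=1$ case, which the paper does not address.
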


\begin{proof}
 By assumptions, from \eqref{E-PW3-k} we get
\begin{equation}\label{E-int-k-umb}
 \Div \xi = 2\,{\rm S}_{\rm mix}({\cal D}_1,\ldots,{\cal D}_k)
 +\sum\nolimits_{r\in\{1,k-1\}}\sum\nolimits_{\,{\bm q}\,\in S(r,k)}\big(\|h_{\bm q}\|^2-\|H_{\bm q}\|^2-\|T_{\bm q}\|^2\big) ,
\end{equation}
where $\xi=\!\sum\nolimits_{r\in\{1,k-1\}}\sum\nolimits_{\,{\bm q}\,\in S(r,k)} H_{\bm q}$.
By conditions, and since $\|H_{\bm q}\|\le \sum_{i=1}^k \|H_{q(i)}\|$, we get $\|H_{\bm q}\|\in{\rm L}^1(M,g)$.
Since $\|\xi\|\le \sum\nolimits_{r\in\{1,k-1\}}\sum\nolimits_{\,{\bm q}\,\in S(r,k)}\|H_{\bm q}\|$,
then also $\|\xi\|\in{\rm L}^1(M,g)$.
Next, by conditions, for any ${\bm q}=(q(1),\ldots,q(r))\in S(r,k)$ with $r\ge1$ we have
\begin{equation*}
 \|h_{\bm q}\|^2-\|H_{\bm q}\|^2 = -\sum\nolimits_{i=1}^r\frac{n_{q(i)}-1}{n_{q(i)}}\,\|\widehat P_{\bm q}H_{q(i)}\|^2 \le0,
\end{equation*}
where $\widehat P_{\bm q}$ is the orthoprojector on the distribution $\bigoplus_{j\notin{\bm q}} {\cal D}_{j}$.
Hence, from ${\rm S}_{\rm mix}({\cal D}_1,\ldots,{\cal D}_k)\le0$ and \eqref{E-int-k-umb} we get $\Div\xi\le0$.
By conditions and Lemma~\ref{L-Div-1} below, $\Div\xi=0$. Thus, see \eqref{E-int-k-umb}, ${\rm S}_{\rm mix}({\cal D}_1,\ldots,{\cal D}_k)=0$ and ${T}_i$ and $h_i$ vanish.
By de Rham decomposition theorem, $(M,g)$ splits.
\end{proof}

Modifying Stokes' theorem on a complete open manifold $(M,g)$ yields the following.

\begin{lemma}[see Proposition~1 in \cite{csc2010}]
\label{L-Div-1}
 Let $(M^n,g)$ be a complete open Riemannian manifold endowed with a vector field $\xi$
 such that $\Div\xi\ge0$. If the norm $\|\xi\|_g\in{\rm L}^1(M,g)$ then $\Div\xi\equiv0$.
\end{lemma}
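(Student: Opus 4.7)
The plan is to localize by multiplying $\xi$ by a family of smooth cutoff functions that exhaust $M$, so that the boundary contribution in the divergence identity is controlled by the ${\rm L}^1$ tail of $\|\xi\|_g$. Fix a basepoint $x_0\in M$ and set $\rho(x)=d_g(x,x_0)$; completeness of $(M,g)$ guarantees that every closed geodesic ball $\overline{B_r(x_0)}$ is compact. First I would construct a family $\{\varphi_r\}_{r>0}\subset C^\infty_c(M)$ with $0\le\varphi_r\le 1$, $\varphi_r\equiv 1$ on $B_r(x_0)$, $\mathrm{supp}\,\varphi_r\subset B_{2r}(x_0)$, and a uniform gradient bound $|\nabla\varphi_r|_g\le C/r$ for some constant $C$ independent of $r$. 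Such a family is standard on any complete manifold: take a one-variable cutoff $\phi$ and set $\varphi_r=\phi(\rho/r)$, smoothing $\rho$ away from the cut locus of $x_0$ by Greene--Wu regularization (or the Calabi trick) to obtain a $C^\infty$ function with the required gradient estimate.

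With the cutoffs at hand, I would apply the classical divergence theorem to the compactly supported vector field $\varphi_r\xi$ and use the product rule $\Div(\varphi_r\xi)=\varphi_r\Div\xi+\<\nabla\varphi_r,\xi\>$ to obtain
\[
 \int_M \varphi_r\,\Div\xi\,{\rm d}\vol_g
 = -\int_M \<\nabla\varphi_r,\xi\>\,{\rm d}\vol_g.
\]
The right-hand side is bounded in absolute value by $(C/r)\int_{B_{2r}(x_0)\setminus B_r(x_0)}\|\xi\|_g\,{\rm d}\vol_g$, which tends to $0$ as $r\to\infty$ because $\|\xi\|_g\in{\rm L}^1(M,g)$. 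On the left-hand side, $\varphi_r\nearrow 1$ pointwise and $\Div\xi\ge 0$ by hypothesis, so monotone convergence yields
\[
 \int_M \Div\xi\,{\rm d}\vol_g = 0.
\]
Since $\Div\xi$ is a continuous non-negative function with vanishing integral, it must vanish identically, giving $\Div\xi\equiv 0$.

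The main obstacle is purely technical: producing the cutoff family with the correct gradient bound on a complete manifold whose distance function is only Lipschitz (and genuinely non-smooth at the cut locus of $x_0$). This is handled by the standard smoothing arguments cited above, after which the rest of the argument reduces to a one-line integration by parts together with two elementary convergence statements. Note that the hypothesis cannot be dropped: on a closed manifold one recovers the usual Stokes identity, while the ${\rm L}^1$ assumption is precisely what allows the boundary term to vanish in the non-compact case.
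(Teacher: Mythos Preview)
The paper does not supply its own proof of this lemma: it is merely quoted, with attribution, from \cite{csc2010}. Hence there is nothing in the present paper to compare your argument against.

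Your cutoff argument is correct and is precisely the standard route to such $L^1$ Stokes-type statements (this is the Yau--Karp--Gaffney circle of ideas, and it is essentially how the result is obtained in \cite{csc2010} as well). Two minor remarks. First, for the monotone convergence step you want $\varphi_r$ to increase in $r$; this is automatic if you fix a single smoothed distance function $\tilde\rho$ with $|\nabla\tilde\rho|_g\le 2$ and set $\varphi_r=\phi(\tilde\rho/r)$ with $\phi$ non-increasing, rather than re-smoothing $\rho$ for each $r$. Alternatively, Fatou's lemma gives $\int_M\Div\xi\le\liminf_r\int_M\varphi_r\Div\xi=0$ directly, bypassing monotonicity. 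Second, your bound $(C/r)\int_{B_{2r}\setminus B_r}\|\xi\|_g$ already tends to $0$ simply because it is at most $(C/r)\,\|\xi\|_{L^1}$; you do not even need the tail to vanish. With these cosmetic adjustments the proof is complete.
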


\begin{example}
\rm
Totally umbilical integrable distributions appear on multiply warped products.
Let $F_1,\ldots,F_k$ be $k$ Riemannian manifolds and let $M=F_1\times\ldots\times F_k$ be their direct product.
A~\textit{multiply warped product} $F_1\times_{u_2}F_2\times\ldots\times_{u_k} F_k$
is $M$ with the metric $g=g_{F_1}\oplus u_2^2\,g_{F_2}\oplus\ldots\oplus u_k^2\,g_{F_k}$, where $u_i:F_1\to(0,\infty)$
for $i=2,\ldots,k$ are smooth functions.
Let ${\cal D}_i$ be the distribution on $M$ obtained from the vectors tangent to (the horizontal lifts of) $F_i$, e.g., \cite{chen1}.
The {leaves} (i.e., tangent to ${\cal D}_i,\ i\ge2$) are totally umbilical submanifolds, with
\begin{eqnarray*}
  H_i=-n_i\nabla(\log u_i),
\end{eqnarray*}
and the {fibers} (i.e., tangent to ${\cal D}_1$) are totally geodesic submanifolds.
Since
\begin{equation*}
 \Div\,{H}_i = -n_i\,(\Delta\,u_i)/u_i -(n_i^2-n_i)\,\|P_{\hat i}\nabla u_i\|^2/u_i^2,
\end{equation*}
where $\Delta=-\Div\circ\nabla$ is the Laplacian on $C^2(F_1)$, and we have
\begin{equation}\label{E-Kmix-warped}
 {\rm S}_{\rm mix}({\cal D}_1,\ldots,{\cal D}_k) = \sum\nolimits_{\,i\ge2} n_i\,(\Delta\,u_i)/u_i\,.
\end{equation}
\end{example}

\begin{corollary}
Let a {multiply warped product} $(M,g)$ be complete open and $\<H_i,H_j\>=0$ for $i\ne j$.
If $\,{\rm S}_{\rm mix}({\cal D}_1,\ldots,{\cal D}_k)\le0$ and $\|H_i\|\in{\rm L}^1(M,g)$ for $1\le i\le k$,
then $M$ is the direct product.
\end{corollary}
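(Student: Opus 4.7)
The plan is to reduce this corollary to the preceding splitting theorem by verifying that a multiply warped product automatically satisfies the structural hypotheses of that theorem beyond those already postulated in the corollary.

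First, I would recall from the preceding example that on $M = F_1 \times_{u_2} F_2 \times \ldots \times_{u_k} F_k$ the distributions ${\cal D}_i$ are all totally umbilical: ${\cal D}_1$ is totally geodesic, and ${\cal D}_i$ for $i \geq 2$ has mean curvature vector $H_i = -n_i \nabla(\log u_i)$. The conditions ${\rm S}_{\rm mix}({\cal D}_1,\ldots,{\cal D}_k) \leq 0$, $\<H_i, H_j\> = 0$ for $i \neq j$, and $\|H_i\| \in {\rm L}^1(M,g)$ are given in the corollary itself.

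The only nontrivial remaining hypothesis to check is that each pair $({\cal D}_i, {\cal D}_j)$ is mixed totally geodesic. I would verify this directly from the definition of $h_{ij}$ in two cases. When $i = 1$ and $j \geq 2$, the standard warped-product identity $\nabla_X Y = (X(u_j)/u_j)\,Y$ for $X \in {\cal D}_1$, $Y \in {\cal D}_j$ shows $\nabla_X Y \in {\cal D}_j \subset {\cal D}_{1,j}$, and since horizontal lifts from distinct factors commute one also has $\nabla_Y X = \nabla_X Y \in {\cal D}_{1,j}$, so $h_{1j}(X,Y) = 0$. When $i, j \geq 2$ with $i \neq j$, a short Koszul-formula computation exploiting that every $u_l$ depends only on $F_1$ and that horizontal lifts from distinct factors commute yields $\nabla_X Y = 0$ for $X \in {\cal D}_i$, $Y \in {\cal D}_j$, so $h_{ij}(X,Y) = 0$ as well.

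With all hypotheses of the preceding theorem verified, I would apply it to conclude that $M$ splits; in particular $h_i = 0$, and hence $H_i = 0$, for every $i$. Since $H_i = -n_i \nabla(\log u_i)$, each warping function $u_i$ must be constant on $F_1$, and after absorbing these constants into rescalings of the factor metrics $g_{F_i}$ the metric $g = g_{F_1} \oplus u_2^2\,g_{F_2} \oplus \ldots \oplus u_k^2\,g_{F_k}$ becomes a Riemannian direct product. I expect the main technical point to be the Koszul-formula check of the mixed-totally-geodesic property for pairs $({\cal D}_i, {\cal D}_j)$ with $i, j \geq 2$, but once one uses the two structural observations above this verification is routine.
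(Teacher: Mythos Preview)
Your proposal is correct and follows essentially the same route the paper intends: the corollary has no separate proof in the paper and is meant to be an immediate specialization of the preceding splitting theorem to the multiply warped product situation. Your explicit Koszul-formula verification that every pair $({\cal D}_i,{\cal D}_j)$ is mixed totally geodesic is in fact more careful than the paper's own later one-line justification (``such manifold is diffeomorphic to the direct product and the Lie bracket does not depend on metric''), which literally only shows mixed integrability; the mixed totally geodesic claim is nonetheless true, and your computation of $\nabla_X Y$ for $X\in{\cal D}_i,\ Y\in{\cal D}_j$ is the right way to see it.
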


The following theorem on isometric immersions of multiply warped products, see \cite[Theorem~10.2]{chen1},
is related to the question by B.-Y.~Chen: 
``\textit{What can we conclude from an arbitrary isometric immersion of a warped product into a Riemannian manifold with arbitrary codimension}?"

\textbf{Theorem A}
\textit{Let $f : F_1\times_{u_2}F_2\times\ldots\times_{u_k} F_k \to \tilde M$ be an isometric immersion of
a multiply warped product $(M,g) := F_1\times_{u_2}F_2\times\ldots\times_{u_k} F_k$ into an arbitrary Riemannian
manifold. Then
\begin{equation}\label{E-C-D}
 \sum\nolimits_{i=2}^k n_i\,(\Delta u_i)/u_i \le \frac{n^2(k-1)}{2\,k}\,H^2+ n_1(n-n_1)\max\tilde K,\quad
 n=\sum\nolimits_{\,j=1}^k n_j,
\end{equation}
where $H^2 = \<H,H\>$ is the squared mean curvature of $f$,
$\max\tilde K(x)$ is the maximum of the sectional curvature of $\tilde M$
restricted to 2-plane sections of the tangent space $T_xM$ of $M$ at $x=(x_1,\ldots,x_k)$.
The equality sign of \eqref{E-C-D} holds identically if and only if the following two statements hold}:

1) \textit{$f$ is a mixed totally geodesic immersion satisfying equalities $\tr_g h_1 =\ldots = \tr_g h_k$};

2) \textit{at each point $x\in M$, the sectional curvature function $\tilde K$ of $\tilde M$ satisfies
$\tilde K(X,Y) = \max\tilde K(x)$ for each unit vector $X\in T_{x_1}(F_1)$ and each unit vector $Y$ in
$T(x_2,\ldots, x_k)(F_2 \times \ldots \times F_k)$.}

\smallskip
On a multiply warped product with $k>2$ each pair of distributions is mixed totally geodesic.
Indeed, such manifold is diffeomorphic to the direct product and the Lie bracket does not depend on metric.
Thus, using Theorem A and our results above, we obtain the following.

\begin{theorem}[Non-existence of immersions]\label{T-C-D}
Let $f : F_1\times_{u_2}F_2\times\ldots\times_{u_k} F_k \to \tilde M$ be an isometric immersion of a closed multiply warped product $(M,g) := F_1\times_{u_2}F_2\times\ldots\times_{u_k} F_k$ into an arbitrary Riemannian manifold.
Let, in addition, $\<H_i,H_j\>=0$ for $i\ne j$.
Then there are no isometric immersions $f: F_1\times_{u_2}F_2\times\ldots\times_{u_k} F_k \to \tilde M$ satisfying the inequality
\begin{equation}\label{Eq-immerse}
 \frac{n^2(k-1)}{2\,k} H^2 < -n_1(n-n_1)\max\tilde K;
\end{equation}
in particular, there are no minimal isometric immersions $f$ when $\tilde K<0$.
\end{theorem}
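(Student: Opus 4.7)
The plan is to argue by contradiction, combining Theorem~A pointwise with the integral formula \eqref{E-int-k} globally.

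First I would suppose an immersion $f$ exists satisfying \eqref{Eq-immerse} at every point of $M$. Theorem~A together with the identity \eqref{E-Kmix-warped} gives
\begin{equation*}
 {\rm S}_{\rm mix}({\cal D}_1,\ldots,{\cal D}_k)
 \le \frac{n^2(k-1)}{2\,k}\,H^2 + n_1(n-n_1)\,\max\tilde K,
\end{equation*}
and the strict hypothesis \eqref{Eq-immerse} forces the right-hand side to be strictly negative at every point of $M$. Since $M$ is closed, this yields $\int_M {\rm S}_{\rm mix}\,{\rm d}\vol_g < 0$.

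Second, I would apply the integral formula \eqref{E-int-k} to derive the opposite inequality $\int_M {\rm S}_{\rm mix}\,{\rm d}\vol_g\ge0$, which closes the contradiction. On a multiply warped product $M$ is diffeomorphic to $F_1\times\ldots\times F_k$, so every partial sum ${\cal D}_{\bm q}=\bigoplus_{i\in{\bm q}}{\cal D}_i$ is integrable and $T_{\bm q}=0$. The distribution ${\cal D}_1$ is totally geodesic and each ${\cal D}_i\ (i\ge2)$ is totally umbilical with $H_i=-n_i\nabla(\log u_i)$, and (as remarked in the excerpt) every pair $({\cal D}_i,{\cal D}_j)$ is mixed totally geodesic. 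Combining these facts with the standing assumption $\langle H_i,H_j\rangle=0$ for $i\ne j$, the computation already carried out in the proof of the preceding splitting theorem gives
\begin{equation*}
 \|h_{\bm q}\|^2-\|H_{\bm q}\|^2
 = -\sum\nolimits_{\,i=1}^r \frac{n_{q(i)}-1}{n_{q(i)}}\,\|\widehat P_{\bm q}\,H_{q(i)}\|^2 \le 0
\end{equation*}
for every ${\bm q}\in S(r,k)$ with $r\in\{1,k-1\}$. Plugging these non-positivities, together with $T_{\bm q}=0$, into \eqref{E-int-k} yields $\int_M {\rm S}_{\rm mix}\,{\rm d}\vol_g\ge0$, which contradicts the first step and rules out the immersion. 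The "in particular" clause is then immediate: if $\tilde K<0$ and $f$ is minimal then $H=0$ and $-n_1(n-n_1)\max\tilde K>0$, so \eqref{Eq-immerse} holds automatically.

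The main obstacle is the second step, in particular the verification of $\|h_{\bm q}\|^2\le\|H_{\bm q}\|^2$ for the large tuples ${\bm q}\in S(k-1,k)$: it requires Lemma~\ref{L-mixed-YU-TG} to kill the off-diagonal blocks of $h_{\bm q}$ (using that each pair is mixed totally geodesic), while the orthogonality hypothesis $\langle H_i,H_j\rangle=0$ is precisely what makes $\|H_{\bm q}\|^2$ decompose diagonally as $\sum_{i}\|\widehat P_{\bm q}H_{q(i)}\|^2$ when the single-factor data are reassembled. Once this sign is secured, the integral formula \eqref{E-int-k} produces the desired two-sided squeeze on $\int_M {\rm S}_{\rm mix}\,{\rm d}\vol_g$ and completes the proof.
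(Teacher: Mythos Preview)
Your argument is correct and follows the same route as the paper: combine Theorem~A with \eqref{E-Kmix-warped} to bound ${\rm S}_{\rm mix}$ from above, and use the integral formula \eqref{E-int-k} (together with $T_{\bm q}=0$ and the identity $\|h_{\bm q}\|^2-\|H_{\bm q}\|^2=-\sum_i\frac{n_{q(i)}-1}{n_{q(i)}}\|\widehat P_{\bm q}H_{q(i)}\|^2$ from the preceding splitting theorem) to bound $\int_M{\rm S}_{\rm mix}$ from below by zero; the hypothesis \eqref{Eq-immerse} then yields a contradiction. The only cosmetic difference is that the paper writes the two bounds as a single chain of inequalities \eqref{E-C-Dmy} rather than as two opposing estimates on $\int_M{\rm S}_{\rm mix}$.
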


\begin{proof}
From \eqref{E-Kmix-warped}, \eqref{E-int-k-umb} and \eqref{E-C-D}, applying the Stokes' Theorem, we get
\begin{eqnarray}\label{E-C-Dmy}
\nonumber
 && 0\le (1/2)\int_M \sum\nolimits_{r\in\{1,k-1\}}\sum\nolimits_{\,{\bm q}\in S(r,k)}
 \sum\nolimits_{i=1}^r\frac{n_{q(i)}-1}{n_{q(i)}}\,\|\widehat P_{\bm q}H_{q(i)}\|^2
 \,{\rm d}\vol_g \\
 &&\qquad \le \frac{n^2(k-1)}{2\,k}\int_M H^2\,{\rm d}\vol_g +\, n_1(n-n_1)\max\tilde K\cdot{\rm Vol}(M,g).
\end{eqnarray}
The equality in \eqref{E-C-Dmy} holds if and only if conclusions 1)--2) of Theorem A are satisfied.
If such isometric immersion exists, then the inequality \eqref{Eq-immerse} yields a contradiction
\end{proof}

\begin{remark}\rm
Theorem~\ref{T-C-D} can be improved replacing $\max\tilde K$ by $\bar S(n_1,\ldots,n_k)$ (with certain factor) defined below.
Let $L_i\ (i=1,2)$ be two subspaces of $T_x\bar M$ at some point $x\in\bar M$,
and $\{E'_i\}$ and $\{{\cal E}'_j\}$ some orthonormal frames of these subspaces.
The following quantity:
\[
 \bar S(L_1,L_2)=\sum\nolimits_{\,i,j}\<\bar R(E'_i,{\cal E}'_j)\,E'_i,\,{\cal E}'_{j}\>
\]
does not depend on the choice of frames.
For any $n_1,\ldots,n_k\in\NN$ with $\sum_{\,i}n_i\le\dim\bar M$,
let $\Gamma_x(n_1,\ldots,n_k)$ be the set of $k$-tuples of pairwise orthogonal subspaces $(L_1,\ldots,L_k)$
with $\dim L_i=n_i\ (1\le i\le k)$ of $T_x\bar M$ at some point $x\in\bar M$.
Set $\bar S(n_1,\ldots,n_k)=\sup\nolimits_{\, x\in\bar M}\bar S_x(n_1,\ldots,n_k)$, where
\[
 \bar S_x(n_1,\ldots,n_k)=\max\{\sum\nolimits_{\,i<j}\bar S(L_i,L_j):\,(L_1,\ldots,L_k)\in\Gamma_x(n_1,\ldots,n_k)\}.
\]
\end{remark}

\section{Hypersurfaces with $k$ distinct principal curvatures}
\label{sec:dupin}

Let $M^n$ be a transversely orientable hypersurface in a Riemannian manifold $\bar M$.
Denote by $A$ the shape operator of $M$ with respect to a unit normal vector field $N$,
and let $\lambda_1\le\ldots\le\lambda_n$ be the principal curvatures (eigenvalues of $A$) -- continuous functions on~$M$.
If at any point of $M$ all principal curvatures are equal, then $M$ is totally umbilical.

Suppose that there exists $k\ge2$ distinct principal curvatures $(\mu_i)$ on $M$ of multiplicities $n_i$, and let ${\cal D}_i\ (1\le i\le k)$ be corresponding eigen-distributions -- subbundles of $TM$.
\[
 \mu_1:=\lambda_1=\ldots=\lambda_{n_1} < \ \ldots \ < \mu_k:=\lambda_{n-n_{k}+1}=\ldots=\lambda_{n}.
\]
Such hypersurface is a Riemannian almost $k$-product manifold $(M,g;{\cal D}_1,\ldots,{\cal D}_k)$, see Section~\ref{sec:if}.

For a hypersurface $M$ in a Riemannian manifold $\bar M(c)$ of constant curvature $c$, it is known the following, see \cite{cecil-ryan}:
if $n_i>1$ for some $i$, then the function $\mu_i:M\to\RR$ is differentiable and ${\cal D}_i$ is integrable and its leaves are totally umbilical in $\bar M(c)$.

The curvature tensor $\bar R$ of $\bar M(c)$ has the well-known view
\[
 \bar R(X,Y)Z = c\,(\<X,Z\>\,Y - \<Y,Z\>\,X).
\]
Denote by $X_i\in\mathfrak{X}_i$ local unit vector fields on ${\cal D}_i\ (i\le k)$.
For the sectional curvature of $M$ we~get
\begin{equation}\label{E-Kmix-ij}
 K(X_i,X_j)=c+\mu_i\,\mu_j,\quad i\ne j.
\end{equation}

\begin{example}\label{Ex-06-PW}\rm For a hypersurface $M$ in $\bar M(c)$ with $k=2$,
using equalities
\[
 \|h_i\|^2-\|H_i\|^2=n_i(1-n_i)\,\frac{\|\nabla\mu_i\|^2}{(\mu_i-\mu_j)^2},\quad
 {\rm S}_{\rm mix}({\cal D}_1,{\cal D}_2)=n_1n_2(c+\mu_1\mu_2),
\]
formula \eqref{E-PW} can be rewritten in terms of $A$ and $\mu_i\ (i=1,2)$ as follows, see \cite{wa1}:
\begin{equation}\label{E-PW-M}
 \Div(H_1 + H_2) = n_1n_2(c+\mu_1\mu_2)
 +\frac{n_1(1-n_1)\,\|\nabla\mu_1\|^2 +n_2(1-n_2)\,\|\nabla\mu_2\|^2}{(\mu_2-\mu_1)^2},
\end{equation}
\end{example}

The next \textbf{problem} was posed by P.~Walczak \cite{wa1}:
``To search for formulae analogous to~\eqref{E-PW-M} in the case of a hypersurface in $\bar M(c)$
of $k>2$ distinct principal curvatures of constant multiplicities".

\smallskip

\noindent
By symmetry of $A$ and the Codazzi equation for a hypersurface in $\bar M(c)$,
\begin{equation}\label{E-Cor-hyper}
 (\nabla_{X}A)(Y)=(\nabla_{Y}A)(X),
\end{equation}
the following tensor ${\cal A}$ of rank 3 is totally symmetric:
\[
 {\cal A}(X,Y,Z)=\<(\nabla_{X}A)(Y),\,Z\>, \quad X,Y,Z\in\mathfrak{X}_M.
\]

\begin{remark}\rm
A submanifold $M$ in $\bar M$ is called \textit{curvature-invariant} if $R(X,Y)Z\in T_xM$ for any $X,Y,Z\in T_xM$ and any $x\in M$, e.g.,~\cite{rov-m}.
Examples are arbitrary submanifolds in $\bar M(c)$.
For a curvature-invariant hypersurface $M\subset\bar M$, Codazzi equation \eqref{E-Cor-hyper} is satisfied,
thus, results of this section on a hypersurface in $\bar M(c)$ can be extended for curvature-invariant hypersurfaces.
\end{remark}

\begin{lemma} For a hypersurface $M$ in $\bar M(c)$ with $k>2$ distinct principal curvatures, we have
\begin{eqnarray}\label{E-ijk-1}
\nonumber
 {\cal A}(X_i,X_j,X_l) \eq (\mu_j-\mu_l)\<\nabla_{X_i}X_j,\,X_l\>,\quad j\ne l,\\
 {\cal A}(X_i,X_j,X_j) \eq X_i(\mu_j),\quad \forall\ i, j.
\end{eqnarray}
\end{lemma}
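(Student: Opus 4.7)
The plan is to unpack the definition $\mathcal{A}(X,Y,Z)=\langle(\nabla_X A)(Y),Z\rangle$ on the adapted frame fields $X_i\in\mathfrak{X}_i$ and use two facts about the shape operator: (i) on $\mathcal{D}_j$ it acts by the scalar $\mu_j$, i.e.\ $A X_j=\mu_j X_j$; and (ii) $A$ is self-adjoint. The Codazzi identity \eqref{E-Cor-hyper} plays no role in the lemma itself — it is only used to register total symmetry of $\mathcal{A}$ — so the proof is a single direct computation.

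First I would write
\[
 (\nabla_{X_i}A)(X_j)=\nabla_{X_i}(A X_j)-A(\nabla_{X_i}X_j)
 = X_i(\mu_j)\,X_j+\mu_j\,\nabla_{X_i}X_j-A(\nabla_{X_i}X_j),
\]
using the product rule together with $AX_j=\mu_j X_j$. Taking the inner product with $X_l$ and moving $A$ across by self-adjointness, $\langle A(\nabla_{X_i}X_j),X_l\rangle=\mu_l\langle\nabla_{X_i}X_j,X_l\rangle$, yields the single master identity
\[
 \mathcal{A}(X_i,X_j,X_l)=X_i(\mu_j)\,\langle X_j,X_l\rangle
 +(\mu_j-\mu_l)\,\langle\nabla_{X_i}X_j,\,X_l\rangle.
\]

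From this, both formulas are immediate specializations. When $j\ne l$, the eigen-distributions $\mathcal{D}_j$ and $\mathcal{D}_l$ are orthogonal, so $\langle X_j,X_l\rangle=0$ and only the second term survives, giving $\mathcal{A}(X_i,X_j,X_l)=(\mu_j-\mu_l)\langle\nabla_{X_i}X_j,X_l\rangle$. When $j=l$, the factor $\mu_j-\mu_l$ kills the second term while $\langle X_j,X_j\rangle=1$, leaving $\mathcal{A}(X_i,X_j,X_j)=X_i(\mu_j)$ for every $i,j$.

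There is no real obstacle here; the only mildly subtle point is remembering that $\mu_j$ is only guaranteed to be a smooth function when $n_j>1$, but since $X_i(\mu_j)$ is interpreted pointwise along $M$ and the unit-vector frame is only assumed local, one can either invoke the differentiability statement recalled earlier in the section for distributions of multiplicity $>1$, or note that when $n_j=1$ the functions $\mu_j$ are still continuous and the identities above are to be read on the open subset where they are smooth (which is dense under the constant-multiplicity hypothesis).
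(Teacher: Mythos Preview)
Your argument is correct and is essentially the same computation as the paper's: both expand $\nabla_{X_i}(AX_j)$ via the Leibniz rule (equivalently, the definition of $(\nabla_{X_i}A)(X_j)$), use $AX_j=\mu_jX_j$ and the symmetry of $A$ to replace $\<A(\nabla_{X_i}X_j),X_l\>$ by $\mu_l\<\nabla_{X_i}X_j,X_l\>$, and then read off the two cases. The only cosmetic difference is that you package the result as a single ``master identity'' before specializing, whereas the paper writes the two expressions for $\<\nabla_{X_i}(AX_j),X_l\>$ side by side.
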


\begin{proof} After derivation of equality $AX_j=\mu_j X_j$ and using symmetry of $A$, we obtain
\begin{eqnarray*}
 \<\nabla_{X_i}(AX_j),\,X_l\> \eq \<(\nabla_{X_i} A)(X_j),\,X_l\> + \<A\nabla_{X_i}X_j,\,X_l\> \\
  \eq {\cal A}(X_i,X_j,X_l) + \mu_l\<\nabla_{X_i}X_j,\,X_l\> ,\\
 \<\nabla_{X_i}(\mu_j X_j),\,X_l\> \eq \mu_j\,\<\nabla_{X_i}X_j,\,X_l\> + X_i(\mu_j)\,\delta_{jl}.
\end{eqnarray*}
From this equalities in \eqref{E-ijk-1} follow.
\end{proof}

\begin{corollary} For a hypersurface $M$ in $\bar M(c)$ with $k>2$ distinct principal curvatures, we have
\begin{equation}\label{E-ijk-0}
 (\mu_j-\mu_l)\,\<\nabla_{X_i}X_j,\,X_l\> = (\mu_i-\mu_l)\,\<\nabla_{X_j}X_i,\,X_l\>,\quad i\ne j\ne l.
\end{equation}
\end{corollary}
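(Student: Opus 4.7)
The plan is very short: the identity is essentially built into the total symmetry of the tensor $\cal A$ combined with the first formula in the preceding lemma. Specifically, the preceding lemma gives
\[
 {\cal A}(X_i,X_j,X_l) = (\mu_j-\mu_l)\,\<\nabla_{X_i}X_j,\,X_l\>,\qquad j\ne l,
\]
and by total symmetry of $\cal A$ (which rests on the Codazzi equation \eqref{E-Cor-hyper}) the left-hand side equals ${\cal A}(X_j,X_i,X_l)$. I would then apply the same lemma a second time, now with the roles of the first two indices swapped, using the hypothesis $i\ne l$, to rewrite
\[
 {\cal A}(X_j,X_i,X_l) = (\mu_i-\mu_l)\,\<\nabla_{X_j}X_i,\,X_l\>.
\]
Equating the two expressions for ${\cal A}(X_i,X_j,X_l)$ yields the claimed identity.

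The only point to check is that the hypothesis $i\ne j\ne l$ (together with the standing assumption that the principal curvatures $\mu_i,\mu_j,\mu_l$ are distinct) is enough for both applications of the lemma: the first needs $j\ne l$, the second needs $i\ne l$, both of which are satisfied. I do not foresee any technical obstacle here — there is nothing to compute; the result is a one-line consequence of total symmetry of ${\cal A}$. The only subtlety worth remarking is that the equation is purely pointwise and does not require the orthonormal fields $X_i,X_j,X_l$ to be globally defined: they may be chosen locally along the eigen-distributions ${\cal D}_i,{\cal D}_j,{\cal D}_l$, and the identity then holds wherever such fields exist.
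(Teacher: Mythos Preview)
Your argument is correct and matches the paper's primary proof exactly: the identity follows immediately from the first line of \eqref{E-ijk-1} together with the total symmetry of ${\cal A}$. The paper also records an alternative derivation by expanding $\<\bar R(X_i,X_j)N,\,X_l\>=0$ directly, but this is offered only as a second route and not as the main argument.
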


\begin{proof} This follows from \eqref{E-ijk-1} and symmetry of ${\cal A}$.
Alternatively, one can use direct derivation,
\begin{eqnarray*}
 0 = \<\bar R(X_i,X_j)N,\,X_l\> \eq \<\mu_{X_i}(\mu_j X_j) - \nabla_{X_j}(\mu_i X_i)
 -\<[X_i,X_j],\,X_l\>\<\mu_l X_l,\, X_l\>\\
 \eq (\mu_j-\mu_l)\,\<\nabla_{X_i}X_j,\,X_l\> -(\mu_i-\mu_l)\,\<\nabla_{X_j}X_i,\,X_l\>,
\end{eqnarray*}
from which \eqref{E-ijk-0} follows.
\end{proof}

Theorem~5.13 in \cite{cecil-ryan}, which states that
\textit{each point of a hypersurface $M^n$ in $\bar M(c)$ with 2, 3, 4 or 6 distinct principal curvatures has a principal coordinate neighborhood if and only if each ${\cal D}_i^\bot$ is integrable}. We complete this result by the following.

\begin{proposition}
Let $M^n$ be a hypersurface in $\bar M(c)$ with $k>2$ distinct principal curvatures at each point.
Then each ${\cal D}_i^\bot$ is integrable if and only if ${\cal A}(X_i,X_j,X_l)=0$ for $X_i\in{\cal D}_i,\,X_j\in{\cal D}_j$ and $X_l\in{\cal D}_l$ with $i<j<l$ on $M^n$.
\end{proposition}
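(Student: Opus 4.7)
The plan is to translate integrability of each $\mathcal{D}_i^\bot$ into a scalar relation between pairs of connection coefficients $\<\nabla_{X_p}X_q,X_r\>$, and then to match that relation against~\eqref{E-ijk-1} by exploiting the total symmetry of~$\mathcal{A}$.

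First I would reduce the problem to a test on frame vectors from three distinct distributions. Every $\mathcal{D}_m$ is integrable on $M^n$ (trivially when $n_m=1$, and as already recalled from \cite{cecil-ryan} when $n_m>1$), so if $Y=\sum_{m\ne i}Y_m$ and $Z=\sum_{m\ne i}Z_m$ are local vector fields in $\mathcal{D}_i^\bot$, the diagonal brackets $[Y_m,Z_m]$ lie in $\mathcal{D}_m\subseteq\mathcal{D}_i^\bot$ and contribute nothing to $P_i[Y,Z]$. Thus $\mathcal{D}_i^\bot$ is integrable iff $\<[Y_j,Z_l],X_i\>=0$ whenever $j\ne l$ and both differ from~$i$. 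The identity $[fY,gZ]=fg[Y,Z]+f(Yg)Z-g(Zf)Y$, combined with orthogonality of the summands, shows this condition is tensorial in $Y_j,Z_l$, so it suffices to verify it on adapted frame vectors $X_j\in\mathcal{D}_j$, $X_l\in\mathcal{D}_l$.

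Next I would convert the frame condition into an $\mathcal{A}$-statement. Writing $\<[X_j,X_l],X_i\>=\<\nabla_{X_j}X_l,X_i\>-\<\nabla_{X_l}X_j,X_i\>$ and applying \eqref{E-ijk-1} to the triples $(j,l,i)$ and $(l,j,i)$ expresses the two terms as $\mathcal{A}$-values divided by $\mu_l-\mu_i$ and $\mu_j-\mu_i$ respectively; total symmetry of~$\mathcal{A}$ (a consequence of Codazzi, already noted in the paper) collapses both numerators to the single invariant $\mathcal{A}(X_i,X_j,X_l)$. Subtracting and combining fractions produces a factor of $(\mu_j-\mu_l)$ in the numerator, over a product of nonzero $\mu$-differences in the denominator; the assumption that the principal curvatures are pairwise distinct makes the scalar coefficient nonzero. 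Hence integrability of $\mathcal{D}_i^\bot$ is equivalent to $\mathcal{A}(X_i,X_j,X_l)=0$ for all $j\ne l$ in $\{1,\dots,k\}\setminus\{i\}$. Ranging the equivalence over all~$i$, and then using total symmetry of~$\mathcal{A}$ to pass from all pairwise-distinct triples to the normalized form $i<j<l$, delivers the proposition.

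The main obstacle I anticipate is the bookkeeping in the opening reduction: one has to argue carefully that $\mathcal{D}_i^\bot$-integrability is controlled only by brackets between frame vectors from two distinct summands other than $\mathcal{D}_i$, so that the already-known individual integrability of each $\mathcal{D}_m$ together with tensoriality handle the remaining cases. The rest is a short algebraic manipulation of differences $\mu_*-\mu_*$ using the symmetry of~$\mathcal{A}$.
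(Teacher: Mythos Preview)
The paper states this proposition without proof, presenting it as an immediate consequence of the preceding Lemma (equation~\eqref{E-ijk-1}) and the total symmetry of~$\mathcal{A}$. Your argument fills in exactly those details: you reduce integrability of $\mathcal{D}_i^\bot$ to the vanishing of $\langle[X_j,X_l],X_i\rangle$ for frame vectors from distinct eigendistributions, then use~\eqref{E-ijk-1} and the symmetry of~$\mathcal{A}$ to identify this with $\mathcal{A}(X_i,X_j,X_l)\cdot\dfrac{\mu_j-\mu_l}{(\mu_l-\mu_i)(\mu_j-\mu_i)}$, which vanishes iff $\mathcal{A}(X_i,X_j,X_l)=0$. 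This is precisely the route the paper's setup points to, and your proof is correct; your extra care in the reduction step (using integrability of each $\mathcal{D}_m$ to discard diagonal brackets and checking tensoriality) is something the paper leaves entirely implicit.
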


Next, we study the problem (posed in \cite{wa1}) for $k=3$ (the case of $k>3$ is similar).

\begin{theorem}\label{T-dupin}
For a hypersurface $M$ in $\bar M(c)$ with $k=3$ distinct principal curvatures, we have
\begin{eqnarray}\label{E-k-3-surf}
\nonumber
 &&\hskip-5mm \Div\sum\nolimits_{\,i}
    n_i\Big(\frac{P_j\nabla\mu_i}{\mu_i-\mu_j} +\frac{P_l\nabla\mu_i}{\mu_i-\mu_l} \Big) \\
 && = \frac12\sum\nolimits_{\,i<j} n_i\,n_j(c+\mu_i\mu_j)
  +\sum\nolimits_{\,i} n_i(1-n_i)\Big(\frac{\|P_j\nabla\mu_i\|^2}{(\mu_i-\mu_j)^2} +\frac{\|P_l\nabla\mu_i\|^2}{(\mu_i-\mu_l)^2} \Big),
\end{eqnarray}
where $(j,l)\in \{1,2,3\}\setminus\{i\}$ and $j<l$.
\end{theorem}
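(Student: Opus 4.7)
The first step is to compute $H_i$ using the Codazzi equation \eqref{E-Cor-hyper}, starting from Lemma \eqref{E-ijk-1} and its symmetrization via the totally symmetric ${\cal A}$. For any unit $Y\in{\cal D}_i$ and $m\ne i$, this yields $\<\nabla_Y Y,X_m\> = X_m(\mu_i)/(\mu_i-\mu_m)$, while a parallel Codazzi argument applied to orthonormal $Y\ne Y'$ in ${\cal D}_i$ forces the ${\cal D}_m$-component of $\nabla_Y Y'+\nabla_{Y'}Y$ to vanish. Hence ${\cal D}_i$ is totally umbilical with
\[
 h_i(Y,Y')=\<Y,Y'\>\Big(\frac{P_j\nabla\mu_i}{\mu_i-\mu_j}+\frac{P_l\nabla\mu_i}{\mu_i-\mu_l}\Big),\quad
 H_i=n_i\Big(\frac{P_j\nabla\mu_i}{\mu_i-\mu_j}+\frac{P_l\nabla\mu_i}{\mu_i-\mu_l}\Big),
\]
which identifies the LHS of \eqref{E-k-3-surf} as $\Div\sum_i H_i$. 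Integrability of each ${\cal D}_i$ gives $T_i=0$, and umbilicity gives $\|h_i\|^2-\|H_i\|^2 = n_i(1-n_i)\bigl(\|P_j\nabla\mu_i\|^2/(\mu_i-\mu_j)^2+\|P_l\nabla\mu_i\|^2/(\mu_i-\mu_l)^2\bigr)$, matching the second term on the RHS of \eqref{E-k-3-surf}.

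Next, invoke \eqref{E-PW-3} of Theorem~\ref{T-k3} for $k=3$. The relation $H_{ij}=P_l(H_i+H_j)$ combined with $P_jH_j=0$ (since $H_j\in{\cal D}_j^\bot$) gives $\sum_{i<j}H_{ij}=\sum_i H_i$, so the LHS of \eqref{E-PW-3} collapses to $\Div(2\sum_i H_i)$. Using \eqref{E-Kmix-ij} to write ${\rm S}_{\rm mix}=\sum_{i<j}n_in_j(c+\mu_i\mu_j)$ together with $T_i=0$, one obtains
\[
 2\Div\sum_i H_i = 2{\rm S}_{\rm mix} + \sum_i(\|h_i\|^2-\|H_i\|^2) + \sum_{i<j}(\|h_{ij}\|^2-\|H_{ij}\|^2-\|T_{ij}\|^2).
\]
Comparing with the target formula \eqref{E-k-3-surf} reduces matters to the single identity
\[
 \sum_{i<j}(\|h_{ij}\|^2-\|H_{ij}\|^2-\|T_{ij}\|^2) = \sum_i(\|h_i\|^2-\|H_i\|^2)-{\rm S}_{\rm mix}.
\]

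Establishing this identity is the main obstacle. The plan is to expand $h_{ij}$, $H_{ij}$, $T_{ij}$ in an adapted basis $\{Y_a\}\subset{\cal D}_i$, $\{Z_b\}\subset{\cal D}_j$. The diagonal entries $h_{ij}(Y_a,Y_a)$ and $h_{ij}(Z_b,Z_b)$ are controlled by umbilicity (Step~1) and, summed over $i<j$, reproduce $\sum_i(\|h_i\|^2-\|H_i\|^2)$ together with a ${\cal D}_l$-cross contribution $-2n_in_j\<P_l\nabla\mu_i,P_l\nabla\mu_j\>/((\mu_i-\mu_l)(\mu_j-\mu_l))$ coming from expanding $\|H_{ij}\|^2$. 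The off-diagonal entries satisfy $h_{ij}(Y_a,Z_b)\pm T_{ij}(Y_a,Z_b)=P_l(\nabla_{Y_a}Z_b\pm\nabla_{Z_b}Y_a)$, and the Codazzi identity \eqref{E-ijk-0}, extended by linearity over the principal frame, gives $P_l\nabla_{Z_b}Y_a=\frac{\mu_j-\mu_l}{\mu_i-\mu_l}\,P_l\nabla_{Y_a}Z_b$; thus $\|h_{ij}(Y_a,Z_b)\|^2-\|T_{ij}(Y_a,Z_b)\|^2=\<P_l\nabla_{Y_a}Z_b,P_l\nabla_{Z_b}Y_a\>$. The hard part is then to recognize that these off-diagonal terms, together with the cross term above, sum to exactly $-{\rm S}_{\rm mix}$; this is a manifestation of the Gauss equation $\<R(X_i,X_j)X_i,X_j\>=c+\mu_i\mu_j$ summed over the adapted basis of ${\cal D}_i\times{\cal D}_j$ and reorganized via the Codazzi relations above. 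Substituting back and dividing by $2$ yields \eqref{E-k-3-surf}.
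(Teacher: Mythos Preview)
Your outline parallels the paper's proof closely through the first two-thirds: both start from \eqref{E-PW-3} with $T_i=0$, identify $h_i$, $H_i$ via Codazzi and umbilicity, compute ${\rm S}_{\rm mix}$ from \eqref{E-Kmix-ij}, observe $\sum_{i<j}H_{ij}=\sum_i H_i$, and split $h_{ij},T_{ij}$ into diagonal and off-diagonal (mixed $V_{ij}$) pieces. Your formula $\|h_{ij}(Y_a,Z_b)\|^2-\|T_{ij}(Y_a,Z_b)\|^2=\<P_l\nabla_{Y_a}Z_b,\,P_l\nabla_{Z_b}Y_a\>$ is exactly what the paper obtains. You also correctly flag the cross term $-2n_in_j\<P_l\nabla\mu_i,P_l\nabla\mu_j\>/((\mu_i-\mu_l)(\mu_j-\mu_l))$ in $\|H_{ij}\|^2$, which the paper's displayed expression for $\|h_{ij}\|^2-\|H_{ij}\|^2$ in fact suppresses.

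The genuine gap is your final step. You assert that the off-diagonal contributions together with the cross terms ``sum to exactly $-{\rm S}_{\rm mix}$'' as ``a manifestation of the Gauss equation \dots\ reorganized via the Codazzi relations.'' This is not an argument, and it is not how the paper closes the computation. The Gauss equation here only yields $K(X_i,X_j)=c+\mu_i\mu_j$, which has already been used to compute ${\rm S}_{\rm mix}$; it carries no further information linking ${\rm S}_{\rm mix}$ (a polynomial in the $\mu_i$) to the off-diagonal terms (which involve $\nabla\mu_i$ and the tensor ${\cal A}$). The paper's mechanism is different and purely algebraic: using \eqref{E-ijk-1} one writes
\[
 \|h_{ij}|_{V_{ij}}\|^2-\|T_{ij}|_{V_{ij}}\|^2
 =\frac{1}{(\mu_i-\mu_l)(\mu_j-\mu_l)}\sum\nolimits_{\alpha,\beta,\gamma}{\cal A}(e_\alpha,\varepsilon_\beta,E_\gamma)^2,
\]
and since ${\cal A}$ is totally symmetric the numerator is the \emph{same} for all three pairs $(i,j)$. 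The sum over $i<j$ then vanishes by the elementary identity
\[
 \frac{1}{(\mu_2-\mu_3)(\mu_1-\mu_3)}+\frac{1}{(\mu_1-\mu_2)(\mu_3-\mu_2)}+\frac{1}{(\mu_2-\mu_1)(\mu_3-\mu_1)}=0.
\]
This cancellation --- not any use of the Gauss equation --- is the missing idea. Once you have it, the diagonal pieces of $\sum_{i<j}(\|h_{ij}\|^2-\|H_{ij}\|^2-\|T_{ij}\|^2)$ reproduce $\sum_i(\|h_i\|^2-\|H_i\|^2)$ (each pair $(P_m\nabla\mu_i)^2/(\mu_i-\mu_m)^2$ appears exactly once), and substituting back into \eqref{E-PW-3} yields the formula after dividing by~$2$.
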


\begin{proof}
By \eqref{E-PW-3} and $T_i=0$, we have
\begin{eqnarray}\label{E-k3-surf-1}
\nonumber
 \Div\big(\sum\nolimits_{\,i} H_i +\sum\nolimits_{\,i<j} H_{ij}\big) \eq 2\,{\rm S}_{\rm mix}({\cal D}_1,{\cal D}_2,{\cal D}_3)
 +\sum\nolimits_{\,i}(\|h_i\|^2 -\|H_i\|^2) \\
 \plus\sum\nolimits_{\,i<j}(\|h_{ij}\|^2 - \|H_{ij}\|^2 -\|T_{ij}\|^2) .
\end{eqnarray}
By \eqref{E-Kmix-ij}, the mixed scalar curvature of $M$ with $k=3$ is
\[
 {\rm S}_{\rm mix}({\cal D}_1,{\cal D}_2,{\cal D}_3)=\sum\nolimits_{\,i<j} n_i\,n_j(c+\mu_i\mu_j).
\]
Note that $\|h_i\|^2=\|H_i\|^2/n_i$, where
\begin{equation*}
 h_i(X,Y) = \<X,Y\>\Big(\frac{P_j\nabla\mu_i}{\mu_i-\mu_j} +\frac{P_l\nabla\mu_i}{\mu_i-\mu_l}\Big),\quad
 H_i = n_i\Big(\frac{P_j\nabla\mu_i}{\mu_i-\mu_j} +\frac{P_l\nabla\mu_i}{\mu_i-\mu_l}\Big).
\end{equation*}
Thus (similarly to Example~\ref{Ex-06-PW}), we obtain
\begin{eqnarray*}
 \|h_i\|^2 -\|H_i\|^2 = n_i(1-n_i)\Big(\,\frac{\|P_j\nabla\mu_i\|^2}{(\mu_i-\mu_j)^2} +\frac{\|P_l\nabla\mu_i\|^2}{(\mu_i-\mu_l)^2}\,\Big).
\end{eqnarray*}
Next, we consider $\|h_{ij}\|^2$ and $\|T_{ij}\|^2$ for $i\ne j$ along three subsets of $TM\oplus TM$:
${\cal D}_{i}\oplus{\cal D}_i$, ${\cal D}_j\oplus{\cal D}_j$ and $V_{ij}=({\cal D}_i\oplus{\cal D}_j)\cup({\cal D}_j\oplus{\cal D}_i)$.
Using
 $h_{ij} = \frac{n_i\,P_l\nabla\mu_i}{\mu_i-\mu_l} + \frac{n_j\,P_l\nabla\mu_j}{\mu_j-\mu_l}$ for $i\ne j\ne l$,
we obtain
\begin{eqnarray*}
 \|h_{ij}\|^2 -\|H_{ij}\|^2 = n_i(1-n_i)\frac{\|P_l\nabla\mu_i\|^2}{(\mu_i-\mu_l)^2}
 +n_j(1-n_j)\frac{\|P_l\nabla\mu_j\|^2}{(\mu_j-\mu_l)^2} + \|h_{ij}|_{\,V_{ij}}\|^2,
\end{eqnarray*}
Since ${\cal D}_i$ are integrable, we have $\|T_{ij}\|^2=\|T_{ij}|_{\,V_{ij}}\|^2$.
Given $i\ne j\ne l$, let $(e_\alpha),\,(\varepsilon_\beta),\,(E_\gamma)$ be local orthonormal frames of
${\cal D}_i,{\cal D}_j$ and ${\cal D}_l$, respectively.
Using \eqref{E-ijk-1}, and
\[
 \|h_{ij}|_{\,V_{ij}}\|^2 = \frac14\sum\nolimits_{\,\alpha,\beta,\gamma}\<\nabla_{e_\alpha}\,\varepsilon_\beta+\nabla_{\varepsilon_\beta}\,e_\alpha,\,E_\gamma\>^2,\quad
 \|T_{ij}|_{\,V_{ij}}\|^2 = \frac14\sum\nolimits_{\,\alpha,\beta,\gamma}\<\nabla_{e_\alpha}\,\varepsilon_\beta-\nabla_{\varepsilon_\beta}\,e_\alpha,\,E_\gamma\>^2,
\]
we find for $(i,j)\in\{(1,2),\,(1,3),\,(2,3)\}$ the equality
\[
 \|h_{ij}|_{\,V_{ij}}\|^2 - \|T_{ij}|_{\,V_{ij}}\|^2
 =\sum\nolimits_{\,\alpha,\beta,\gamma}\<\nabla_{e_\alpha}\,\varepsilon_\beta,\,E_\gamma\>\,\<\nabla_{\varepsilon_\beta}\,e_\alpha,\,E_\gamma\> \\
 =\frac{\sum\nolimits_{\,\alpha,\beta,\gamma}\|{\cal A}(e_\alpha,\,\varepsilon_\beta,\,E_\gamma)\|^2} {(\mu_i-\mu_l)(\mu_j-\mu_l)}\,.
\]
Since
\[
  \frac{1}{(\mu_2-\mu_3)(\mu_1-\mu_3)}
 +\frac{1}{(\mu_1-\mu_2)(\mu_3-\mu_2)}
 +\frac{1}{(\mu_2-\mu_1)(\mu_3-\mu_1)}=0,
\]
the sum
$\sum\nolimits_{\,i<j} \big(\|h_{ij}|_{\,V_{ij}}\|^2 - \|T_{ij}|_{\,V_{ij}}\|^2\big)$
vanishes.  From the above, we have
\begin{eqnarray*}
 &&\hskip-7mm \sum\nolimits_{\,i}(\|h_i\|^2 {-}\|H_i\|^2) + \!\sum\nolimits_{\,i<j}(\|h_{ij}\|^2 {-} \|H_{ij}\|^2 {-}\|T_{ij}\|^2)
 = 2 n_1(1{-}n_1)\Big(\frac{\|P_2\nabla\mu_1\|^2}{(\mu_1-\mu_2)^2} {+}\frac{\|P_3\nabla\mu_1\|^2}{(\mu_1-\mu_3)^2} \Big)
 \\ && +\,2\, n_2(1-n_2)\Big(\frac{\|P_1\nabla\mu_2\|^2}{(\mu_2-\mu_1)^2} +\frac{\|P_3\nabla\mu_2\|^2}{(\mu_2-\mu_3)^2} \Big)
  + 2\, n_3(1-n_3)\Big(\frac{\|P_1\nabla\mu_3\|^2}{(\mu_3-\mu_1)^2} +\frac{\|P_2\nabla\mu_3\|^2}{(\mu_3-\mu_2)^2} \Big).
\end{eqnarray*}
Note that the terms under the divergence in \eqref{E-k3-surf-1} are
\begin{eqnarray*}
 && \sum\nolimits_{\,i} H_i {+}\sum\nolimits_{\,i<j} H_{ij} =
 \sum\nolimits_{i} n_i\Big(\frac{P_j\nabla\mu_i}{\mu_i-\mu_j} +\frac{P_l\nabla\mu_i}{\mu_i-\mu_l}\Big)
 {+}\sum\nolimits_{i<j} \Big(n_i\,\frac{P_l\nabla\mu_i}{\mu_i-\mu_l} + n_j\,\frac{P_l\nabla\mu_j}{\mu_j-\mu_l}\Big) \\
 && =  2\,n_1\Big(\frac{P_2\nabla\mu_1}{\mu_1-\mu_2} +\frac{P_3\nabla\mu_1}{\mu_1-\mu_3} \Big)
 +2\,n_2\Big(\frac{P_1\nabla\mu_2}{\mu_2-\mu_1} +\frac{P_3\nabla\mu_2}{\mu_2-\mu_3} \Big)
 +2\,n_3\Big(\frac{P_1\nabla\mu_3}{\mu_3-\mu_1} +\frac{P_2\nabla\mu_3}{\mu_3-\mu_2} \Big).
\end{eqnarray*}
From the above \eqref{E-k-3-surf} follows.
\end{proof}

Using Stokes' theorem to \eqref{E-k-3-surf} on a closed $M$, we obtain the following integral formula.

\begin{corollary} In conditions of Theorem~\ref{T-dupin}, for a closed hypersurface $M\subset\bar M(c)$, we have
\begin{eqnarray*}
 && \int_M\Big[\sum\nolimits_{\,i<j} n_i\,n_j(c+\mu_i\mu_j)
  +2\sum\nolimits_{\,i} n_i(1-n_i)\Big(\frac{\|P_j\nabla\mu_i\|^2}{(\mu_i-\mu_j)^2} +\frac{\|P_l\nabla\mu_i\|^2}{(\mu_i-\mu_l)^2} \Big) \Big]\,{\rm d}\vol_g =0,
\end{eqnarray*}
where $(j,l)\in \{1,2,3\}\setminus\{i\}$ and $j<l$.
 \end{corollary}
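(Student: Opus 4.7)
The plan is to apply Stokes' theorem directly to the pointwise divergence identity \eqref{E-k-3-surf} established in Theorem~\ref{T-dupin}. Since $M$ is closed (compact without boundary), for any smooth vector field $X$ on $M$ one has $\int_M \Div X\,d\vol_g = 0$. In our setting the vector field
$$X = \sum\nolimits_{\,i} n_i\Bigl(\frac{P_j\nabla\mu_i}{\mu_i-\mu_j} + \frac{P_l\nabla\mu_i}{\mu_i-\mu_l}\Bigr)$$
is globally smooth on $M$, because by hypothesis the principal curvatures $\mu_i$ are distinct at every point with constant multiplicities, so the denominators $\mu_i-\mu_j$ never vanish and the eigenprojectors $P_i$ depend smoothly on the point; the $\mu_i$ themselves are differentiable by the remarks recalled from \cite{cecil-ryan} just before~\eqref{E-Kmix-ij}.

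Integrating both sides of~\eqref{E-k-3-surf} therefore kills the left-hand side and gives
$$0 = \int_M\Bigl[\tfrac12\sum\nolimits_{\,i<j} n_i n_j(c+\mu_i\mu_j) + \sum\nolimits_{\,i} n_i(1-n_i)\Bigl(\frac{\|P_j\nabla\mu_i\|^2}{(\mu_i-\mu_j)^2} + \frac{\|P_l\nabla\mu_i\|^2}{(\mu_i-\mu_l)^2}\Bigr)\Bigr]\,d\vol_g.$$
Multiplying through by $2$ clears the factor $1/2$ in front of the mixed scalar curvature contribution $\sum_{i<j} n_i n_j(c+\mu_i\mu_j)$ and doubles the gradient-norm contributions, producing exactly the claimed integral identity with index convention $(j,l)\in\{1,2,3\}\setminus\{i\}$, $j<l$. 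No genuine obstacle is involved: the corollary is a clean consequence of the divergence theorem once Theorem~\ref{T-dupin} is available, the only point requiring care being the global smoothness of $X$, which is automatic under the standing distinctness and constant-multiplicity assumptions.
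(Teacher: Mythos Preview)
Your proof is correct and follows exactly the paper's approach: the corollary is stated with the one-line justification ``Using Stokes' theorem to \eqref{E-k-3-surf} on a closed $M$,'' and your argument makes this explicit, including the factor-of-$2$ normalization and the (implicit in the paper) observation that the vector field is globally smooth under the standing hypotheses.
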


\end{document}